\documentclass[12pt]{article}
\usepackage{amsmath,amssymb,amsthm,epsf, graphics}
\usepackage{graphicx}
\usepackage{xcolor}
\usepackage{mathtools}
\usepackage{float}
\usepackage{mathabx}
\usepackage{hyperref}
\usepackage{enumerate}
\usepackage{lipsum}
\usepackage{mwe}

\theoremstyle{definition}

\newtheorem{theo}{{\bf{Theorem}}}[section]
\newtheorem{defi}[theo]{{\bf Definition}}
\newtheorem{prop}[theo]{{\bf Proposition}}
\newtheorem{lem}[theo]{{\bf Lemma}}

\newtheorem{remark}[theo]{Remark}

\allowdisplaybreaks

\pagenumbering{arabic}

\setlength{\parindent}{9pt}
\setlength{\textwidth}{6.5in}
\setlength{\oddsidemargin}{0in}
\addtolength{\textheight}{1in}

\newcommand{\ZZ}{\mathbb{Z}}

\newcommand{\RR}{\mathbb{R}}

\newcommand{\TT}{\mathbb{T}}

\title{An improved bound on the Hausdorff dimension of sticky Kakeya sets in $\RR^4$}

\begin{document}

\author{
  Mukul Rai Choudhuri
}

\maketitle

\begin{abstract}
    Kakeya sets are compact subsets of $\RR^n$ that contain a unit line segment pointing in every direction. The Kakeya conjecture states that such sets must have Hausdorff dimension $n$. The property of stickiness was first discovered by Katz-Łaba-Tao in their 1999 breakthrough paper on the Kakeya problem. Then Wang-Zahl formalized the definition of a sticky Kakeya set, and proposed a special case of the Kakeya conjecture for such sets. Specifically this conjecture states that sticky Kakeya sets in $\RR^n$ have Hausdorff dimension $n$ and Wang-Zahl went on to prove the conjecture for $n=3$.
    
    A planebrush is a geometric object which is a higher dimensional analogue of Wolff's hairbrush. Using the planebrush argument, Katz-Zahl showed that Kakeya sets in $\RR^4$ have Hausdorff dimension at least 3.059. If we restrict our attention to sticky Kakeya sets, we can improve upon this bound by combining the planebrush result with additional stickiness property. To be precise, we will show in this paper that sticky Kakeya sets in $\RR^4$ have dimension at least 3.25.
\end{abstract}

\section{Introduction}

A \textit{Kakeya set} (sometimes called Besicovitch set) in $\RR^n$ is a compact set $E\subset \RR^n$ containing a unit line segment in every direction. In 1919, Besicovitch gave a construction of a Kakeya set of Lebesgue measure zero \cite{besi}. Interest in Kakeya sets grew even further after Fefferman gave a counter-example to the Fourier ball multiplier conjecture using Kakeya sets \cite{feff}. Followed by this, more surprising connections of Kakeya sets to problems in oscillatory integrals (harmonic analysis), the analysis of dispersive and wave equations (PDE), combinatorics and number theory were discovered \cite{bour,tao,wolff}, enhancing the importance of these sets.

Even though Kakeya sets with Lebesgue measure zero exist, a natural question then was: could these sets be big in some other sense. The Kakeya conjecture states that every Kakeya set in $\RR^n$ must have Hausdorff and Minkowski dimension $n$. For $n=2$, the Kakeya conjecture was proved by Davies in 1971 \cite{davies}. However, in higher dimensions only partial results exist. Drury in 1983 \cite{drury} and Christ, Duoandikoetxea, Rubio de Francia in 1986 \cite{christ} were able to prove that Kakeya sets for $n\geq 3$ had dimension at least $(n+1)/2$. In 1995, Wolff \cite{wolff1} improved this bound using the geometric ``hairbrush" argument to $(n+2)/2$, a higher dimensional analogue of the geometric ``bush" argument due to Bourgain which gave a pre-existing bound of $(n+1)/2$ for the dimension.

The next important juncture was when Bourgain \cite{bour} used additive combinatorics to make progress on the conjecture which also spurred many further developments. For $n=3$, Katz, Łaba and Tao in 2000 \cite{katzlabatao} proved that the upper Minkowski dimension of Kakeya sets is at least $5/2+c$ where $c$ is a small absolute constant. Later, in 2017, Katz and Zahl \cite{kzahl} proved the same for the Hausdorff dimension. A key ingredient in the argument by Katz, Łaba and Tao was the usage of a property called stickiness. Roughly speaking, a collection of $\delta$-tubes in $\RR^n$ whose coaxial lines point in $\delta$-separated directions is said to be sticky at the scale $\rho$ if the $\delta$-tubes arrange themselves into roughly $\rho^{1-n}$ $\rho$-tubes, each of which contain roughly $\rho^{n-1}/\delta^{n-1}$ $\delta$-tubes. Note that stickiness was initially defined as a property satisfied by certain collections of tubes and not subsets of Euclidean space. 

Motivated by the original definition of the property by Katz-Łaba-Tao \cite{katzlabatao}, Wang and Zahl created a definition of sticky Kakeya sets as a special subclass of Kakeya sets in \cite{wangzahl}. We would now like to present their definition. However, before we can define sticky Kakeya sets, we need some preliminary definitions and notation. Therefore, let $\mathcal{L}$ denote the set of (affine) lines in $\RR^n$. Given a line $l$, we denote the direction of $l$ as $u\in \RR\mathbb{P}^{n-1}$ and the unique point on $l$ such that the line joining it to the origin is perpendicular to $l$ as $p\in\RR^n$. We then equip $\mathcal{L}$ with the metric $d(l,l')=|p-p'|+\angle(u,u')$. 

We will also need the notion of packing dimension, so we define it here. Let $E$ be any subset of a metric space. We first define the packing pre-measure $\widetilde{\mathcal{P}}^{\theta}$ as
$$
\widetilde{\mathcal{P}}^{\theta}(E)=\lim_{\epsilon\downarrow 0}\bigg( \sup \sum_{j=1}^\infty (2r_j)^\theta \bigg)\, ,
$$
where the supremum is over all collections of disjoint open balls $\{ B(x_j,r_j)\}_{j=1}^\infty$ with centers in $E$ and radii $r_j<\epsilon$. Then we define the packing measure $\mathcal{P}^\theta$ as
$$
\mathcal{P}^\theta(E)=\inf\bigg\{\sum_{j=1}^\infty \widetilde{\mathcal{P}}^{\theta}(E_i) : E\subset \bigcup_{j=1}^\infty E_i \bigg\}.
$$
Finally we define the packing dimension of $E$ as 
\begin{equation}
    \dim_P(E)=\inf\{\theta: \mathcal{P}^\theta(E)=0\}.
\end{equation}
An introduction to packing dimension may be found in \cite{fgbook}. Since the direction map from $\mathcal{L}$ to $\RR\mathbb{P}^{n-1}$ is Lipschitz, if $L \subset \mathcal{L}$ contains a line in every direction then $\dim_P (L)\geq n-1$. Now we are ready to define sticky Kakeya sets. As mentioned earlier, this definition is due to Wang-Zahl \cite{wangzahl}.

\begin{defi}\label{stdef}
A compact set $K\subset \RR^n$ is called a \textit{sticky Kakeya set} if there is a set of lines $L$ with packing dimension $n-1$ that contains at least one line in each direction, so that $l\cap K$ contains a unit interval for each $l\in L$.
\end{defi}

Observe that in the above definition, there is no explicit appearance of the multi-scale self-similar property that was present in the original definition of stickiness by Katz-Łaba-Tao \cite{katzlabatao}. However, Wang-Zahl proceed to set up a discretization of sticky Kakeya sets which leads to multi-scale self-similar properties. We shall use the same setup and follow their discretization procedure in Section \ref{dmss} of this paper. We shall only state the results of this discretization, and then use the multi-scale self-similar properties in later sections of the paper. Proofs and full detail of the procedure can be found in Wang-Zahl \cite{wangzahl}. Although, to provide some intuition, we shall briefly sketch how the discretization process works in Section \ref{dmss}.  

The sticky Kakeya conjecture is the statement that every sticky Kakeya set in $\RR^n$ has Hausdorff and Minkowski dimension $n$. Since every sticky Kakeya set is in particular a Kakeya set, the sticky Kakeya conjecture is weaker than the Kakeya conjecture. More discussion on the connection between the Kakeya conjecture and the sticky Kakeya conjecture can be found in \cite{wangzahl}. The sticky Kakeya conjecture obviously holds for $n=2$ as the Kakeya conjecture itself is proven in this case \cite{davies}. Furthermore, in 2022, Wang and Zahl \cite{wangzahl} proved sticky Kakeya sets in $\RR^3$ have Hausdorff dimension $3$, so the sticky Kakeya conjecture is true for $n=3$ as well. In higher dimensions, the sticky Kakeya conjecture remains open, and in this paper we will be improving the bound for $n=4$. In particular, we prove the following.

\begin{theo}\label{main}
    Sticky Kakeya sets in $\RR^4$ have Hausdorff dimension at least 3.25.
\end{theo}

To further contextualize the above result, we now describe the pre-existing bounds for the Kakeya problem in $\RR^4$. Note that if we plug in $n=4$ for the bound given by the hairbrush argument of Wolff \cite{wolff1}, then we we get that Kakeya sets in $\RR^4$ have  Hausdorff dimension at least 3. Later on, the results of Guth-Zahl, Zahl and Katz-Rogers \cite{guthzahl,zahl1,katzrog} established that Kakeya sets in $\RR^4$ have Hausdorff dimension at least $3+1/40=3.025$. These three papers used algebraic/polynomial arguments that were inspired by Dvir's proof of the finite field analogue of the Kakeya conjecture \cite{dvir}.

The next milestone occurred in 2021, when Katz and Zahl improved the bound further to 3.059 using a new geometric argument called the planebrush method \cite{katzzahl}. A planebrush is a geometrical object which is a higher dimensional analogue of Wolff's hairbrush \cite{wolff1}. The planebrush argument gives a bound of $10/3$ for Kakeya sets in $\RR^4$ whose lines through a typical point concentrate into a 2-plane. This result of Katz-Zahl \cite{katzzahl} will be precisely stated as Theorem \ref{planebrush} in this paper. Guth and Zahl \cite{guthzahl} had previously established an estimate which gave a dimension bound of 3.25 for trilinear collections of tubes in $\RR^4$ (stated as Theorem \ref{trikak} in this paper). To prove that Kakeya sets in $\RR^4$ have Hausdorff dimension at least 3.059, Katz-Zahl \cite{katzzahl} combined both of the above results. The dimension bound of 3.059 that they obtained improves upon pre-existing bounds, but working with general Kakeya sets presents a lot of challenging technical issues. This is why the bound obtained is weaker than the bounds for plany and trilinear cases namely 10/3 and 3.25 respectively. 

However, if we restrict our attention to sticky Kakeya sets, the extra multi-scale structure we get eases the technical difficulties in running the argument, which allows us to obtain a better dimension bound of 3.25 as stated in Theorem \ref{main}. Note that this is the minimum of the bounds we get from the trilinear and plany cases separately. Now that we have contextualized our main result, we shall next briefly outline a sketch of its proof. As a final remark, we mention that some developments regarding the Kakeya conjecture have been skipped in the discussion here for the sake of brevity, and one can find more details in the survey articles by Wolff \cite{wolff} and Katz, Tao \cite{katztao}.

\subsection{A sketch of the proof}\label{pfske}

As we sketch the proof of Theorem \ref{main}, we shall simultaneously provide a brief overview of how this paper is organized. Firstly, in Section \ref{dmss}, we shall discuss the discretization of sticky Kakeya sets. The main results in this section are Proposition \ref{stidisc} and Proposition \ref{multiscaleprop}. Both of these propositions are due to Wang and Zahl \cite{wangzahl}. The latter gives the multi-scale self-similarity property which will be used many times in future sections. 

In Section \ref{mainlemsec}, we shall see that sticky Kakeya sets in $\RR^4$ split into broadly two cases. The first case is the trilinear case, as shown in Figure \ref{trifig}. In this case, most triples of tubes passing through a common point have direction vectors pointing in (quantitatively) linearly independent directions. This case is relatively more tractable. Applying the trilinear Kakeya bound due to Guth and Zahl \cite{guthzahl}, which will be stated as Theorem \ref{trikak} in this paper, we obtain the required dimension bound of 3.25 in this case. 

\begin{figure}[h]
        \centering
        \includegraphics[scale=0.4]{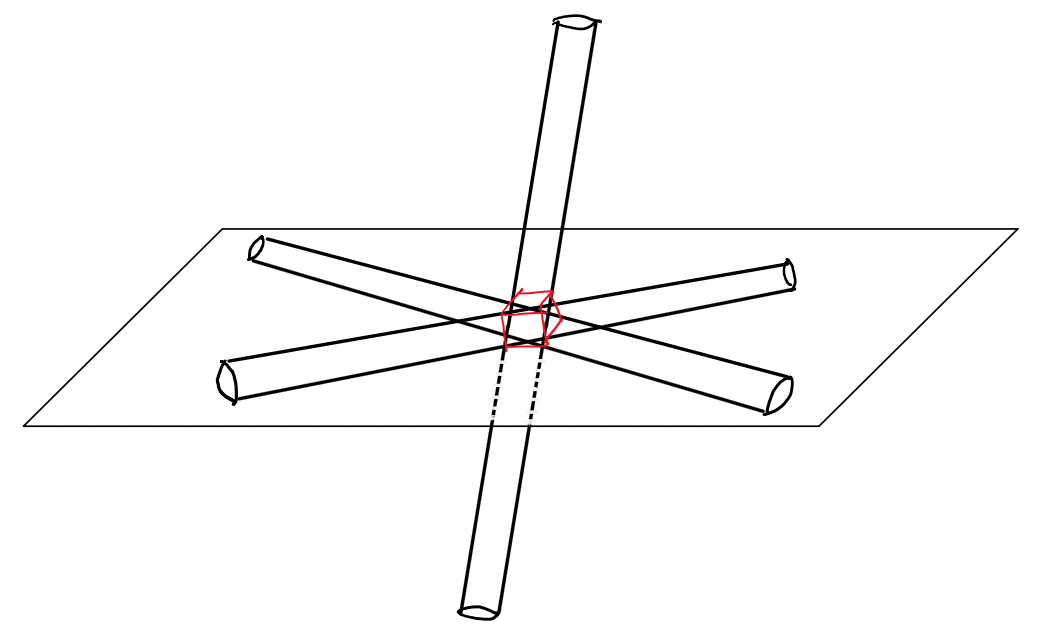}
        \caption{Trilinear case}
        \label{trifig}
\end{figure}

The second case is the negation of the first case, that is the situation when the tubes are not trilinear. This case is more challenging to deal with because we can not in general conclude that the $\delta$-tubes passing through any point lie in a $\delta$-neighborhood of a plane (this would have given us planyness in terms of Definition \ref{planydef}, further allowing us to directly establish a dimension bound of 10/3 using the planebrush bound due to Katz and Zahl \cite{katzzahl}). However, we can make the following weaker conclusion in this case: choosing $\rho=\delta^{1/N}$ for some appropriately chosen large integer $N$, we get that the $\delta$-tubes through any point lie in a $\rho$-neighborhood of a plane. We call this case the ``weakly" plany case, because although $\rho$ is much smaller than 1, it is much greater than $\delta$ (see Figure \ref{wkplnypic}). Using stickiness, we can show that the following holds at the scale $\rho$: the $\rho$-tubes through any point lie in a $\rho$-neighborhood of a plane (see Figure \ref{rhoplany}). Therefore, by Definition \ref{planydef}, we get that the collection at scale $\rho$ is plany. Thus, we can apply the planebrush bound, due to Katz and Zahl \cite{katzzahl}, at the coarser scale $\rho$. The planebrush bound will be stated in this paper as Theorem \ref{planebrush}.

Aside from the above considerations, we will also need to use induction on scales to get the required bound at the finer scale $\delta$. Suppose, for simplicity, that the multiplicity at scale $\delta$, $\rho$ and within every $\rho$-tube is constant and let these be equal to $\mu_\delta$, $\mu_{\text{coarse}}$ and $\mu_{\text{fine}}$. Then using stickiness, we can get the heuristic relation
\begin{equation}\label{multieq}
\mu_\delta=\mu_{\text{fine}}\mu_{\text{coarse}}.
\end{equation}
Again, this should not be taken literally. It is true only in a rough sense, rigorous expressions of this will occur in Proposition \ref{multiscaleprop} as upper bounds (see Remark \ref{multiscalermk}). This relationship will be used multiple times in later sections of this paper: to establish regularity, and to connect across scales when we do induction. The induction on scales will be done rigorously in Section \ref{pfsec}, where we will complete the proof of Theorem \ref{main} and conclude the paper.

\begin{figure}[h]
    \centering
    \begin{minipage}{0.45\textwidth}
        \centering
        \includegraphics[scale=0.4]{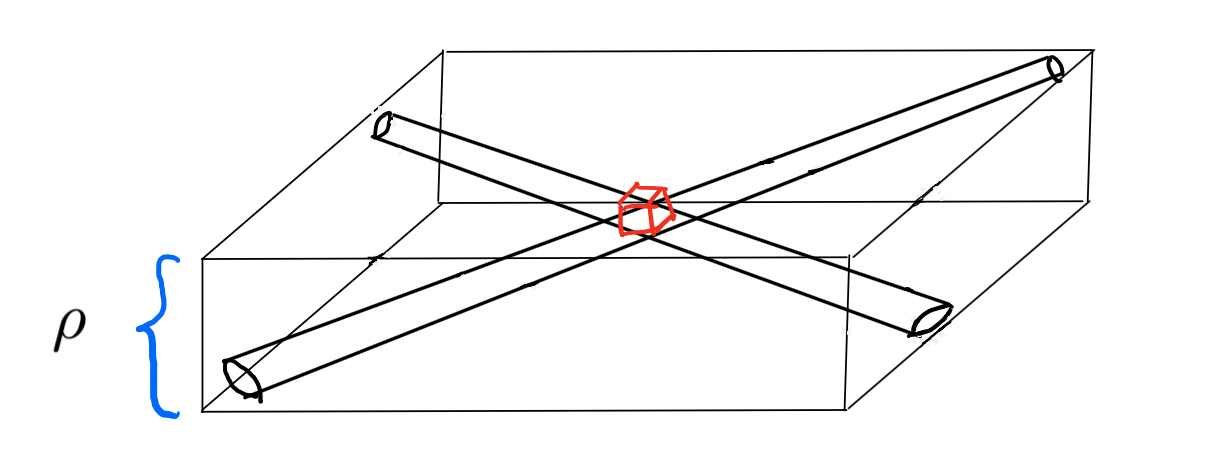} % first figure itself
        \caption{``Weakly" plany case. Only two $\delta$-tubes shown for clarity.}
        \label{wkplnypic}
    \end{minipage}\hfill
    \begin{minipage}{0.45\textwidth}
        \centering
        \includegraphics[scale=0.4]{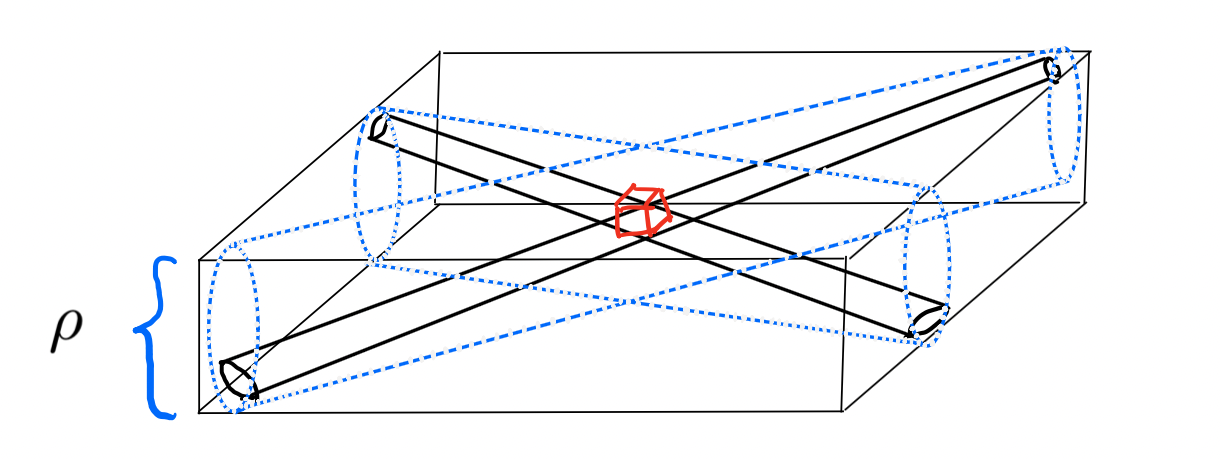} % second figure itself
        \caption{Tubes at scale $\rho$ are plany in this case.}
        \label{rhoplany}
    \end{minipage}
\end{figure}

As a final note, we discuss the most technically difficult juncture of this paper. Namely, the splitting of the sticky Kakeya set into the trilinear and plany cases, which is done in Lemma \ref{mainlem}. Multi-scale self-similarity coming from stickiness (Proposition \ref{multiscaleprop}) plays a key role in proving the lemma. By combining multi-scale structure with the trilinear-plany dichotomy, Lemma \ref{mainlem} plays a crucial role when we finally do induction on scales in Section \ref{pfsec}. Katz and Zahl \cite{katzzahl} also had a lemma which split between trilinear and plany cases, but for general Kakeya sets. While there are some similarities between the two lemmas, there are also important differences. The lemma in Katz-Zahl had weaker conclusions than Lemma \ref{mainlem}, because they did not have the additional hypothesis of stickiness. This in turn led to their weaker general bound of 3.059.

\subsection{Acknowledgement}

The author would like to thank Izabella Łaba and Joshua Zahl for many helpful conversations and suggestions during the preparation of this manuscript. 

\section{Framework}\label{framsec}

Since the procedure of discretizing sticky Kakeya sets was already introduced and fleshed out in Wang-Zahl \cite{wangzahl}, we shall follow their work. Specifically, we shall define lines, cubes, shadings, refinements the way they did. This will then allow us to import their discretization results (Proposition \ref{stidisc} and Proposition \ref{multiscaleprop}) directly. The setup in Katz-Zahl \cite{katzzahl} only has some minor differences with the way things are defined in Wang-Zahl \cite{wangzahl}. Therefore, by following the definitions of Wang-Zahl \cite{wangzahl}, we can import results from Katz-Zahl \cite{katzzahl} as well without trouble. Specifically Theorem \ref{planebrush}, which is the planebrush bound, will be quoted from Katz-Zahl \cite{katzzahl}. Now let us get started with the required definitions. 

We define $\mathcal{L}_n$ to be the set of lines in $\RR^n$ of the form $(p,0)+\RR v$, where $p\in [-\frac{1}{n},\frac{1}{n}]^{n-1}$ and $v\in S^{n-1}$ has final coordinate $v_n\geq 1/2$. We define $d(l,l')=|p-p'|+\angle(v,v')$. We define the measure $\lambda_n$ to be the product of $(n-1)$ dimensional Lebesgue measure and normalized surface measure on $S^{n-1}$ (we denote the latter by $\nu_{n-1}$). With these definitions of distance and measure, the packing dimension of a set $L\subset \mathcal{L}_n$ agrees with its upper modified box dimension. 
\begin{defi} (Tubes).
    We define the $\delta$-\textit{tube} $T$ with coaxial line $l$ to be $N_{2n\delta}(l)\cap [-1,1]^n$ where $N_{2n\delta}(l)$ represents the $2n\delta$-neighborhood of the line $l$. We say the $\delta$-tubes $T,T'$ are \textit{essentially identical} if $d(l,l')\leq\delta$ where $l,l'$ are the coaxial lines for $T,T'$ respectively, otherwise $T$ and $T'$ are \textit{essentially distinct}. We say that $T,T'$ are \textit{essentially parallel} if $\angle(v,v')\leq \delta$, where $v,v'\in S^{n-1}$ represent the directions of the coaxial lines of the tubes $T,T'$ respectively.
\end{defi} 
This definition is a bit nonstandard, since we are using the $2n\delta$-neighborhood instead of the $\delta$-neighborhood of $l$. This is done so that a cube of side-length $\delta$ intersecting $l$ will be contained in the associated tube $T$. 

\begin{defi} (Cubes and shadings).
    A $\delta$-\textit{cube} is a set of the form $Q=[0,\delta)^n+p$, where $p\in (\delta\ZZ)^n$. A \textit{shading} of a $\delta$-tube $T$ is a set $Y(T)\subset T$ that is a union of $\delta$-cubes.  If $\TT$ is a collection of $\delta$-tubes and for each $T\in \TT$, we have the associated shading $Y(T)$; then we refer to the pair $(\TT,Y)_\delta$ as a set of tubes and their associated shading.
\end{defi}

Note that sometimes shadings are defined as arbitrary subsets of tubes. However, both the papers of Wang-Zahl \cite{wangzahl} and Katz-Zahl \cite{katzzahl} use discrete shadings consisting of cubes. This offers advantages in certain situations and since we are using many results from these two papers, we also use discrete version of shadings with cubes as in the definition above.

 If $(\TT,Y)_\delta$ is a set of tubes and their associated shading, we write $E_\TT$ to denote the set $\bigcup_{T\in \TT} Y(T)$. We write $\mathcal{Q}(Y)$ to denote the set of $\delta$-cubes that are in at least one shading $Y(T)$ for some $T\in \TT$. For each $\delta$-cube $Q$, we define $\TT_Y(Q)=\{T\in \TT: Q\subset Y(T)\}$. The average density and multiplicity of a shaded collection of tubes $(\TT,Y)$ is denoted as 
$$
\lambda_Y=\frac{1}{\#\TT}\sum_{T\in \TT} \frac{|Y(T)|}{|T|}\, \textrm{  and 
 }\,\mu_Y=\frac{\sum_{Q\in \mathcal{Q}(Y)} \#\TT_Y(Q)}{\#\mathcal{Q}(Y)} \,\,\textrm{  respectively.}
$$ 

Next, we would like to define refinements. However, before we do that, we require a little bit of notation. For two quantities $A,B$, we will denote by $A\lesssim B$ the fact that the inequality $A\leq C B$ holds with an implicit constant $C$ typically depending on the dimension $n$ of the Euclidean space we are working in. It could also depend on other parameters which will be clear from context or stated explicitly. We will write $A \sim B$ if $A\lesssim B$ and $B\lesssim A$.

\begin{defi}(Refinements).
    We say that $(\TT',Y')_\delta$ is a $t$-\textit{refinement} of $(\TT,Y)_\delta$ if $\TT'\subset \TT$ and $Y'(T)\subset Y(T)$ for all $T\in \TT'$, and if we have 
    \begin{equation}\label{refineineq1}
        \sum_{T\in \TT'} |Y'(T)|\geq t \sum_{T\in \TT} |Y(T)|.
    \end{equation} Furthermore, if we have 
    \begin{equation}\label{refineineq2}
        \sum_{T\in \TT'} |Y'(T)|\gtrsim |\log \delta|^{-C} \sum_{T\in \TT} |Y(T)|,
    \end{equation}
    where $C\geq 0$ is a constant that depends only on $n$, then we just call it as a refinement.
\end{defi} 

\begin{remark}
    We shall sometimes refer to the quantity $\sum_{T\in \TT} |Y(T)|$ as the ``mass" of the shading.  Observe that if $(\TT',Y')_\delta$ is a $t$-refinement of $(\TT,Y)_\delta$, then $\lambda_{Y'}\geq t\lambda_Y$. Therefore, refinements lower both mass and density by a factor of at most $t$. Note that it is possible that the new density $\lambda_{Y'}$ could be much larger than the original density $\lambda_Y$. On the other hand, the mass of the new shading can never exceed the mass of the original shading. 
\end{remark}

\begin{lem}\label{constaref}
    Let $(\TT,Y)_\delta$ be a collection of $\delta$-tubes and their associated shading. Further assume that $\#\TT\lesssim \delta^{-100n}$ (this will be satisfied by any collection one meets in practice). Then
    \begin{enumerate}
        \item  There exists a refinement $(\TT',Y')_\delta$ of $(\TT,Y)_\delta$ such that $|Y'(T)|/|T|\sim \lambda_{Y'}$ for all $T\in \TT'$ i.e the density is constant across all tubes.
        \item There exists a (in general, different from previous) refinement $(\TT'',Y'')_\delta$ of $(\TT,Y)_\delta$ such that $\#{\TT''}_{Y''}(Q)\sim \mu_{Y''}$ for all $Q\in \mathcal{Q}(Y'')$.
    \end{enumerate}
\end{lem}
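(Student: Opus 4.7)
The plan is to prove each part by a single dyadic pigeonholing on the quantity one wishes to equidistribute. The hypothesis $\#\TT \lesssim \delta^{-100n}$ is present to ensure that in both cases the relevant dyadic range contains only $O(\log \delta^{-1})$ levels, so the pigeonhole step costs a factor of only $(\log \delta^{-1})^{-1}$ in mass, which is exactly the slack permitted by \eqref{refineineq2}.

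For part (a), I first observe that for any $T$ with $|Y(T)|>0$, the density $|Y(T)|/|T|$ lies in an interval of the form $[c_n\delta,\,1]$: the upper bound is immediate from $Y(T)\subset T$, and the lower bound follows because $Y(T)$ is a nonempty union of $\delta$-cubes (so $|Y(T)|\geq \delta^n$) while $|T|\sim \delta^{n-1}$. I would dyadically partition this interval into $O(\log \delta^{-1})$ windows, sort the tubes accordingly, and by pigeonhole on the total mass $\sum_{T\in\TT} |Y(T)|$ select a single window $[D,2D]$ whose tubes carry at least a $(\log \delta^{-1})^{-1}$-fraction of the mass. Setting $\TT'$ equal to this subcollection with $Y'(T)=Y(T)$ unchanged, every individual density lies in $[D,2D]$, and therefore so does the average $\lambda_{Y'}$, giving $|Y'(T)|/|T|\sim \lambda_{Y'}$ for all $T\in \TT'$.

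For part (b), I would apply the identical strategy, but to cubes and their multiplicities. The value $\#\TT_Y(Q)$ is a positive integer bounded by $\#\TT \lesssim \delta^{-100n}$, so again there are only $O(\log \delta^{-1})$ dyadic levels. I would partition $\mathcal{Q}(Y)$ accordingly and select the class $\mathcal{Q}^*$ maximizing the contribution to
$\sum_{Q\in\mathcal{Q}(Y)} \#\TT_Y(Q)\,|Q| = \sum_{T\in\TT} |Y(T)|$.
Then set $Y''(T)=Y(T)\cap \bigcup_{Q\in\mathcal{Q}^*} Q$ and let $\TT''=\{T\in\TT:Y''(T)\neq\emptyset\}$. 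Since $\mathcal{Q}(Y'')=\mathcal{Q}^*$ and for every $Q\in\mathcal{Q}^*$ the set $\TT''_{Y''}(Q)$ coincides with $\TT_Y(Q)$, whose cardinality lies in the chosen dyadic window, each such cardinality is comparable to the average $\mu_{Y''}$.

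There is no genuine obstacle here; the content of the argument is the observation that both density and multiplicity range over $O(\log \delta^{-1})$ dyadic scales, matching the logarithmic loss that the definition of a refinement already tolerates. The only minor points to verify are that the chosen subcollection really satisfies \eqref{refineineq2} (immediate from the pigeonhole bound on the retained mass) and that leaving the retained portion of each shading untouched forces the averaged quantity to sit in the same dyadic window as the individual ones.
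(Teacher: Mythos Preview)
Your proposal is correct and follows essentially the same approach as the paper: in both parts you dyadically pigeonhole on exactly the quantity the paper does (tube density for (a), cube multiplicity for (b)), use the same range bounds to get $O(\log\delta^{-1})$ levels, and define the refinement in the same way, invoking the identity $\sum_{Q}\#\TT_Y(Q)\,|Q|=\sum_{T}|Y(T)|$ just as the paper does. There is nothing substantively different between your argument and the paper's.
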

\begin{proof}
    For (a) we dyadically partition $\TT$ into sub-collections $\{\TT_i\}$ where $\TT_i$ is defined for each $i$ as the set of tubes $T\in\TT$ such that we have $|Y(T)|/|T|\in (2^{-i},2^{-i+1}]$. Since the volume of a $\delta$-cube is $\delta^n$, for any tube which has a non empty shading we have $|Y(T)|/|T|\gtrsim \delta$. Hence the index $i$ runs from $1$ until $C_n\log \delta^{-1}$. By pigeonholing there exists an $i$ such that 
    $$
    \sum_{T\in \TT_i}|Y(T)|\gtrsim |\log \delta|^{-1} \sum_{T\in \TT}|Y(T)|.
    $$
    Therefore, if we let $\TT'=\TT_i$ and $Y'=Y$ for these tubes, then by \eqref{refineineq2}, we see that $(\TT',Y')_\delta$ is a refinement with the desired constant density property.

    For (b), we first dyadically partition the cubes $\mathcal{Q}(Y)$ into sub-collections $\{\mathcal{Q}_i\}$ where $\mathcal{Q}_i$ is defined as the set of cubes $Q\in \mathcal{Q}(Y)$ such that $\#\TT_Y(Q)\in [2^{i-1},2^i)$. Since $\#\TT\lesssim \delta^{-100n}$, the index $i$ runs from 1 until $C_n\log \delta^{-1}$. By pigeonholing there exists $i$ such that
    \begin{equation}\label{refineineqlem1}
        \sum_{Q\in \mathcal{Q}_i} \#\TT_Y(Q) \gtrsim |\log \delta|^{-1} \sum_{Q\in \mathcal{Q}(Y)} \#\TT_Y(Q).
    \end{equation}
    We would like to convert the above inequality into one of the form \eqref{refineineq2}. For this we need to convert a sum involving multiplicities into one that involves volumes of shadings. This can be done as follows. First define the incidence function $f:\mathcal{Q}(Y)\times \TT\to \RR$ as
    $$
    f(Q,T) = \begin{cases}
1 &\text{if $Q\subset Y(T)$}\\
0 &\text{otherwise.}
\end{cases}
    $$
    Now by interchanging the following finite double-sum, we get that
    \begin{equation}\label{refineineqlem2}
        \delta^n\sum_{Q\in \mathcal{Q}(Y)} \#\TT_Y(Q)=\delta^n\sum_{Q\in \mathcal{Q}(Y)} \sum_{T\in \TT} f(Q,T)=\sum_{T\in \TT} \bigg(\delta^n\sum_{Q\in \mathcal{Q}(Y)} f(Q,T)\bigg)=\sum_{T\in \TT} |Y(T)|.
    \end{equation}
    The above equality connecting multiplicities and volumes is worth noting as it will show up multiple times in various junctures of this paper. Observe that it also shows that a shading is essentially a collection of cube-tube incidence pairs. Now we define $(\TT'',Y'')_\delta$ to be shaded collection of tubes one gets by throwing away all the cubes outside of $\mathcal{Q}_i$ from $\mathcal{Q}(Y)$. Moreover, for every $Q\in \mathcal{Q}_i$, we keep every tube from $\TT_Y(Q)$ in the new collection. Now combining \eqref{refineineqlem1} and \eqref{refineineqlem2}, we get that $(\TT'',Y'')_\delta$ is a refinement of $(\TT,Y)_\delta$ completing the proof.    
\end{proof}

\begin{defi} (Covers)
    Let $T$ be a $\delta$-tube and $\Tilde{T}$ be a $\rho$-tube, with $\delta\leq \rho$. We say that $\Tilde{T}$ \textit{covers} $T$ if their respective coaxial lines satisfy $d(l,\Tilde{l})\leq \rho/2$. If $\TT$ is a collection of $\delta$-tubes and $\Tilde{\TT}$ is a collection of $\rho$-tubes, we say that $\Tilde{\TT}$ covers $\TT$ if every tube in $\TT$ is covered by some tube in $\Tilde{\TT}$. If this is the case, we write $\TT[\Tilde{T}]$ to denote the set of tubes in $\TT$ that are covered by $\Tilde{T}$. Similarly we say $(\Tilde{\TT},\Tilde{Y})_\rho$ covers $(\TT,Y)_\delta$ if $\Tilde{\TT}$ covers $\TT$ and for each $\Tilde{T}\in \Tilde{\TT}$ and each $T\in \TT[\Tilde{T}]$, we have $Y(T)\subset \Tilde{Y}(\Tilde{T})$.
\end{defi}

Sometimes we will encounter a situation where $(\Tilde{\TT},\Tilde{Y})_\rho$ covers $(\TT,Y)_\delta$ and we do a refinement on $(\Tilde{\TT},\Tilde{Y})_\rho$. However, this may damage the covering property. To rectify this situation, we make the following definition.

\begin{defi}\label{canondef}
    (Canonical refinements). Let $(\TT,Y)_\delta$ and $(\Tilde{\TT},\Tilde{Y})_\rho$ be collections of $\delta$-tubes and $\rho$-tubes respectively with their associated shading. Further suppose that $(\Tilde{\TT},\Tilde{Y})_\rho$ covers $(\TT,Y)_\delta$ and that $(\Tilde{\TT}',\Tilde{Y}')_\rho$ is a $t$-refinement of $(\Tilde{\TT},\Tilde{Y})_\rho$. Then we define the \textit{canonical refinement} $(\TT',Y')_\delta$ to be the largest possible refinement of $(\TT,Y)_\delta$ such that $(\Tilde{\TT}',\Tilde{Y}')_\rho$ covers $(\TT,Y)_\delta$.
\end{defi}

\begin{remark}\label{canonrmk}
If $(\Tilde{\TT}',\Tilde{Y}')_\rho$ is a $t$-refinement of $(\Tilde{\TT},\Tilde{Y})_\rho$ and $(\TT',Y')_\delta$ is the canonical refinement of $(\TT,Y)_\delta$, then in general we cannot say that $(\TT',Y')_\delta$ is also a $t$-refinement of $(\TT,Y)_\delta$. However, if we have some regularity property of how the $\delta$-cubes are arranged inside the larger $\rho$-cubes then we can say something. Specifically, suppose that each tube $T\in \TT'$ is covered by a single tube from $\Tilde{\TT}$ and further suppose that there exists a constant $w>0$ such that 
\begin{equation}\label{canonical}
    \sum_{T\in \TT[\Tilde{T}]} |Y(T)\cap \Tilde{Q}|\sim w
\end{equation}
for each $\Tilde{T}\in \Tilde{\TT}$ and $\Tilde{Q}\in \Tilde{Y}(\Tilde{T})$. Note that $w$ here does not depend on choice of $\Tilde{T}$ and $\Tilde{Q}$, and we don't impose any condition on its size. Then we get that $(\TT',Y')_\delta$ is a $t'$-refinement of $(\TT,Y)_\delta$ with $t'\gtrsim t$. This can be seen as follows. The fact that $(\Tilde{\TT}',\Tilde{Y}')_\rho$ is a $t$-refinement of $(\Tilde{\TT},\Tilde{Y})_\rho$ means that we are keeping a $t$-fraction of incidence pairs $(\Tilde{Q},\Tilde{T})$ (see \eqref{refineineqlem2}) at the larger scale. Moreover, because of \eqref{canonical}, every incidence pair $(\Tilde{Q},\Tilde{T})$ at the larger scale has the same amount of ``$\delta$-mass" $\sim w$ inside of it. The reason we mention this here is that these conditions appear as Properties (i) and (vi) in Proposition \ref{multiscaleprop}. Therefore, in that situation there, canonical refinements are well behaved by this remark. This will come in handy in the proof of Lemma \ref{mainlem}.
\end{remark}

\section{Discretization and multi-scale structure}\label{dmss}
In this section we will be discretizing sticky Kakeya sets and realizing their multi-scale self-similarity properties. The framework and results are due to Wang and Zahl \cite{wangzahl}. We shall borrow definitions and propositions from them directly. First we define a quantity called $\sigma_n$, that will allow us to formulate a discretized version of Theorem \ref{main}. 
\begin{defi}\label{31}
    For $s,t,\delta\in (0,1)$, define
    \begin{equation}\label{presigmadef1}
        M(s,t,\delta)=\inf_{\TT,Y} \bigg| \bigcup_{T\in \TT} Y(T)\bigg|.
    \end{equation}
    Here the infimum is taken over all pairs $(\TT,Y)_\delta$ with the following properties.
    \begin{enumerate}
        \item The tubes in $\TT$ are essentially distinct.
        \item For each $\delta\leq \rho\leq 1$, $\TT$ can be covered by a set of $\rho$-tubes, with the property that at most $\delta^{-t}$ $\rho$-tubes are essentially parallel to any given $\rho$-tube.
        \item $\sum_{T\in \TT} |Y(T)|\geq \delta^s$.
    \end{enumerate}
\end{defi}

Note that part (b) roughly says that $\TT$ is a collection of $\delta$-tubes pointing in $\delta$ separated directions and that further it is covered by a collection of $\rho$-tubes pointing in $\rho$-separated directions. This is similar to the way stickiness was originally defined for collections of tubes in Katz, Łaba, Tao \cite{katzlabatao}. Next, let
\begin{equation}\label{presigmadef2}
    N(s,t)=\limsup_{\delta\to 0^+}\frac{\log M(s,t,\delta)}{\log \delta}.
\end{equation}
Then we define $\sigma_n$ as
\begin{equation}\label{sigmadef}
    \sigma_n=\lim_{(s,t)\to (0,0)}N(s,t)=\inf_{(s,t)\in (0,1)^2} N(s,t).
\end{equation}
Having defined the quantity $\sigma_n$, we would now like to connect it with Theorem \ref{main}. This will give us a discretized formulation of the theorem. Before we get to that, recall that sticky Kakeya sets are those Kakeya sets whose lines have packing dimension $n-1$ (Definition \ref{stdef}). We now relate the Hausdorff dimension of sticky Kakeya sets with the quantity $\sigma_n$ in the following proposition. This is Proposition 2.1 from Wang-Zahl \cite{wangzahl}.

\begin{prop}\label{stidisc}
Let K be a sticky Kakeya set. Then $\dim_H K \geq n- \sigma_n$.
\end{prop}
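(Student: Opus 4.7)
The plan is to argue by contrapositive: assume $\dim_H K < n - \sigma_n$ and construct, at arbitrarily small scales $\delta$, an admissible tube/shading configuration $(\TT,Y)_\delta$ whose union has volume strictly smaller than $M(s,t,\delta)$ for some fixed $(s,t)$, contradicting the definition of $\sigma_n$. Concretely, pick $\alpha$ with $\dim_H K < \alpha < n - \sigma_n$, and then use $\sigma_n = \inf_{(s,t)} N(s,t)$ to select $(s,t) \in (0,1)^2$ satisfying $N(s,t) < n - \alpha$. Fixing a small gap between these quantities, along a sequence $\delta_k \to 0$ we will have $M(s,t,\delta_k) \geq \delta_k^{n-\alpha+\eta}$ for some $\eta > 0$, so the contradiction will follow if we can exhibit an admissible $(\TT,Y)_{\delta_k}$ with $\bigl|\bigcup Y(T)\bigr| \lesssim \delta_k^{n-\alpha-\eta/2}$.

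The shading comes from an efficient cover of $K$. Since $\dim_H K < \alpha$, I can cover $K$ by balls $\{B(x_j,r_j)\}$ with $\sum r_j^\alpha$ arbitrarily small; after a dyadic pigeonhole in $r_j$ and subdividing to a single scale, I reduce (at the cost of absorbing $|\log\delta|$ factors consistent with the $\limsup$ in \eqref{presigmadef2}) to a union $\mathcal{C}$ of $\delta$-cubes covering a piece of $K$ of positive $\mathcal{H}^\alpha$-content, with $\#\mathcal{C} \lesssim \delta^{-\alpha}$ and hence total volume $\lesssim \delta^{n-\alpha}$. For each $l \in L$, let $T_l$ be the associated $\delta$-tube and define $Y(T_l)$ to be the union of those cubes in $\mathcal{C}$ that meet a large fraction of the unit segment $l \cap K$. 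Passing to a maximal $\delta$-separated subset of $L$ gives essentially distinct tubes, verifying condition (a); since $L$ contains a line in every direction, $\#\TT \gtrsim \delta^{-(n-1)}$, each shading contains roughly $\delta^{-1}$ cubes, so $\sum_T |Y(T)| \gtrsim 1 \geq \delta^s$, giving condition (c).

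The delicate part is condition (b), which is where the hypothesis $\dim_P L = n-1$ (not merely $\dim_H L \geq n-1$) is essential. By the definition of packing dimension together with the identification of $\dim_P$ with the upper modified box dimension on $\mathcal{L}_n$, I can decompose $L = \bigcup_i L_i$ so that each $L_i$ has $\overline{\dim}_B L_i \leq n-1+t/2$; a further pigeonhole isolates a single $L_i$ retaining a large fraction of the total shading mass. Then at each dyadic scale $\rho \in [\delta,1]$ the piece $L_i$ admits a cover by $\lesssim \rho^{-(n-1+t/2)}$ $\rho$-balls in $\mathcal{L}_n$, equivalently by that many $\rho$-tubes covering $\TT$. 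Since the direction map is Lipschitz and essentially parallel $\rho$-tubes correspond to $\rho$-close directions, pigeonholing in directions at each scale (and restricting $L$ further) forces the maximum number of $\rho$-tubes pointing in any single direction to be at most $\rho^{-t/2}\leq \delta^{-t/2}$, which is dominated by $\delta^{-t}$ as required.

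The main obstacle is doing these pigeonholes compatibly across all scales $\rho \in [\delta,1]$ while preserving conditions (a) and (c): each per-scale direction refinement throws away a portion of $L$, and performing this at $\sim |\log \delta|$ scales could in principle exhaust the mass needed for (c). To overcome this I would execute a single multi-scale pigeonhole (summing the logarithmic losses into the refinement constant $|\log\delta|^{-C}$ absorbed in the $\limsup$ defining $N(s,t)$) and rely on the sharp equality $\dim_P L = n-1$ so that the pigeonhole losses at each scale are slack of size $\rho^{t/2}$ rather than genuine obstructions. Once (a)--(c) are all verified for $(\TT,Y)_\delta$, the bound $\bigl|\bigcup Y(T)\bigr| \leq |\mathcal{C}| \lesssim \delta^{n-\alpha}$ contradicts $M(s,t,\delta)\geq \delta^{N(s,t)+o(1)}$ for $\delta$ taken along the appropriate subsequence, completing the proof.
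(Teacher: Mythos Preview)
The paper does not itself prove this proposition; it defers to Wang--Zahl \cite{wangzahl} and offers only an informal sketch (for $n=2$, via a Kakeya map $\phi$ whose graph has small dimension). Your overall contrapositive strategy---build an admissible $(\TT,Y)_\delta$ from an efficient Hausdorff cover of $K$, use $\dim_P L = n-1$ to secure condition (b)---is the right one and is consistent with that sketch.

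There is, however, a real gap in your single-scale reduction. You write that the dyadic pigeonhole in the radii $r_j$ produces $\delta$-cubes ``covering a piece of $K$ of positive $\mathcal{H}^\alpha$-content''; but $\dim_H K<\alpha$ forces $\mathcal{H}^\alpha_\infty(K)=0$, so this is vacuous. More substantively, pigeonholing only over the cover does not guarantee that the selected dyadic class $\mathcal{C}$ meets a large fraction of $l\cap K$ for \emph{any} line $l$, which is what you need for condition (c). The correct order is to pigeonhole over lines: for each $l\in L$ some dyadic class $E_{k(l)}$ covers a $\gtrsim |\log\delta_{\min}|^{-1}$-fraction of the unit segment $l\cap K$, and a second pigeonhole (using surface measure on $S^{n-1}$ pulled back through a measurable selector $v\mapsto l_v\in L$) isolates a single scale $\delta=2^{-k_0}$ working for a direction set of nearly full measure. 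Only then do you get $\gtrsim\delta^{-(n-1)}$ essentially distinct tubes with $|Y(T)|\gtrsim\delta^{n-1}|\log\delta|^{-1}$, hence condition (c).

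Two smaller points. First, a sign slip: from $N(s,t)<n-\alpha$ one deduces $M(s,t,\delta)\geq\delta^{n-\alpha-\eta}$ for all small $\delta$ (not $\delta^{n-\alpha+\eta}$); the contradiction then comes from exhibiting $(\TT,Y)_\delta$ with $\bigl|\bigcup Y(T)\bigr|\lesssim\delta^{n-\alpha}$. Second, your outline for condition (b)---decompose $L=\bigcup L_i$ with $\overline{\dim}_B L_i\leq n-1+t/2$, then pigeonhole to a single $L_i$---is the right mechanism, and you have correctly flagged the simultaneous-over-$\rho$ compatibility as the main technical hurdle.
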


The full proof of the above proposition can be found in Wang-Zahl \cite{wangzahl} and we shall not reproduce it here. However, we shall give an informal idea of how discretizing sticky Kakeya sets gives us collections of tubes that satisfy Properties (a), (b), (c) from Definition \ref{31}. 

For the sake of convenience we will be working in $\RR^2$, but the sketch presented can be generalized to any dimension. Again for convenience, let us characterize/make an alternative definition of a Kakeya set in $\RR^2$. First we define a function $\phi:[0,1]\to [0,1]$ which is to assign a basepoint to every direction and hence called a Kakeya map. Associated to this Kakeya map $\phi$, we define the Kakeya set $K_\phi=\{(\phi(v),0)+t(v,1):t,v\in [0,1]\}$. In this setup, stickiness in terms of Definition \ref{stdef} means that the graph of the function $\phi$ (which is a subset of $[0,1]^2$) has Assouad dimension 1. Therefore, at the scale $\delta$, the graph can be covered by $1/\delta$ many $\delta$ squares and the same thing holds for $\rho$. Let us represent these two scales pictorially.

\begin{figure}[h]
\includegraphics[width=8cm, height=5cm]{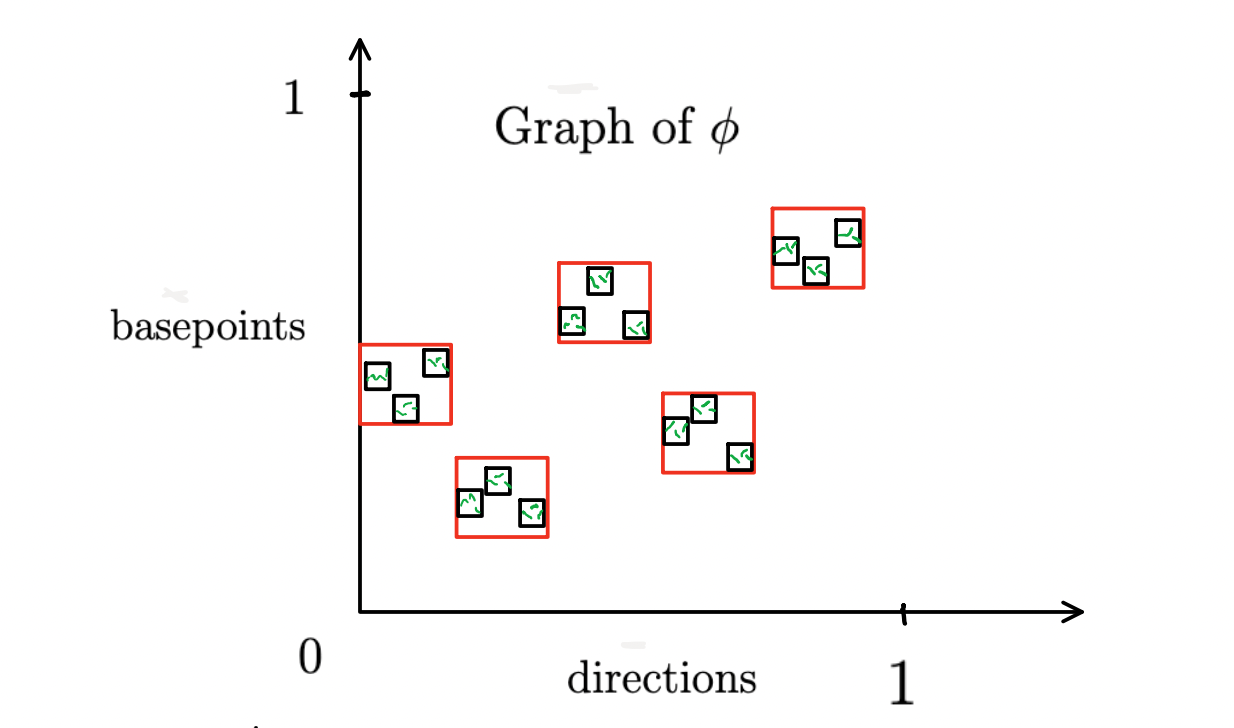}
\centering
\caption{Discretizing sticky Kakeya sets}
\label{stipic}
\end{figure}

We will have the squares appearing consecutively as shown in the picture, with the $\delta$ squares nested inside the $\rho$ squares. Each $\rho$ square has $\rho/\delta$ many squares inside of it. The $\delta$ and $\rho$ squares in the above graph represent $\delta$ and $\rho$ tubes in $\RR^2$ respectively. Therefore, this nesting in the graphical representation means that the collection of $\delta$-tubes pointing in $\delta$ separated directions are covered by the collection of $\rho$-tubes pointing in $\rho$-separated directions. Thus, we have traveled from the dimension of the graph of the Kakeya map as in Definition \ref{stdef} to a discretized statement about tubes with multi-scale structure as in Definition \ref{31}. We have now connected the two notions of stickiness.

Recall that Theorem \ref{main}, which is our main goal, states that sticky Kakeya sets in $\RR^4$ have Hausdorff dimension at least 3.25. Therefore, by virtue of Proposition \ref{stidisc}, to prove Theorem \ref{main}, it suffices to show the following.
\begin{theo}\label{sti2}
    $\sigma_4\leq 3/4$.
\end{theo}
Thus, our aim now becomes to prove the above statement. However, recall that the quantity $\sigma_n$ is defined using the volumes of shadings of tubes. Hence, we would like to make a further reduction to a statement concerning the volumes of shadings of tubes. This will be easier for us to work with. For this purpose, we need to introduce extremal collections of tubes which are a special case of the collections of tubes we saw in Definition \ref{31}.
\begin{defi}
    Let $\eta,\delta>0$. We say a pair $(\TT,Y)_\delta$ is $\eta$-\textit{extremal} if it satisfies Items (a), (b) and (c) from Definition \ref{31} with $s=t=\eta$ and furthermore 
    \begin{equation}\label{extrprop}
        \bigg|\bigcup_{T\in \TT} Y(T)\bigg|\leq \delta^{\sigma_n-\eta}.
    \end{equation}
\end{defi}

\begin{remark}\label{smolrmk}
    Suppose that $(\TT,Y)_\delta$ is $\eta$-extremal and that $(\TT',Y')_\delta$ is a refinement of $(\TT,Y)_\delta$. After doing such a refinement, all upper bounds are preserved. Therefore, $(\TT',Y')_\delta$ satisfies Items (a),(b) from Definition \ref{31} with $t=\eta$, and also from \eqref{extrprop}, we get that $|\bigcup_{T\in \TT'} Y'(T)|\leq \delta^{\sigma_n-\eta}$. The only inequality which might fail is (c) of Definition \ref{31}. However, since $(\TT',Y')_\delta$ is a refinement of $(\TT,Y)_\delta$, we get that 
    $$
    \sum_{T\in \TT'}|Y'(T)|\geq |\log \delta|^{-C} \sum_{T\in \TT} |Y(T)|\geq \delta^{2\eta}.
    $$ 
    Therefore, $(\TT',Y')_\delta$ is $2\eta$-extremal. We shall use this little fact often in our arguments.
\end{remark}

\begin{remark}\label{hugermk}
Note that if $(\TT,Y)_\delta$ is $\eta$-extremal, then 
\begin{equation}\label{numtubes}
    \delta^{1-n+\eta}\lesssim \#\TT \lesssim \delta^{1-n-\eta}
\end{equation}
This follows from Properties (b) and (c) of Definition \ref{31}. Combing the above inequality again with Property (c) of Definition \ref{31}, we obtain the following useful inequality on the density of the shading 
\begin{equation}\label{densityext}
    \lambda_Y=\frac{1}{\#\TT}\sum_{T\in \TT} \frac{|Y(T)|}{|T|}\gtrsim \delta^{2\eta}.
\end{equation}
Therefore, we see that extremal collections are an almost full maximal collection of $\delta$-tubes pointing in $\delta$-separated directions with some subpolynomial losses. As for the average multiplicity, we have
\begin{equation}\label{avgmulti1}
    \mu_Y=\frac{\sum_{Q\in \mathcal{Q}(Y)} \#\TT_Y(Q)}{\#\mathcal{Q}(Y)}=\frac{\sum_{T\in \TT}|Y(T)|}{\delta^n \#\mathcal{Q}(Y)}=\frac{\sum_{T\in \TT}|Y(T)|}{|\bigcup_{T\in \TT} Y(T)|}.
\end{equation}
Here we used \eqref{refineineqlem2}. Next combining \eqref{extrprop} with Property (c) of Definition \ref{31}, we get the following lower bound on average multiplicity
\begin{equation}
    \mu_Y=\frac{\sum_{T\in \TT}|Y(T)|}{|\bigcup_{T\in \TT} Y(T)|}\gtrsim \frac{\delta^\eta}{\delta^{\sigma_n-\eta}}=\delta^{\sigma_n+2\eta}.
\end{equation}
We can also achieve an upper bound for the average multiplicity. However, the order of quantifiers is more subtle for this. Hence, we make a separate lemma for it.
\end{remark}

\begin{lem}\label{hugermk2}
    For every $\epsilon$, there exists an $\eta>0$ and $\delta_0>0$ so that the following holds for all $\delta\in (0,\delta_0]$. Let $(\TT,Y)_\delta$ be an $\eta$-extremal collection. Then we have
\begin{equation}\label{lbuni}
    \bigg|\bigcup_{T\in \TT} Y(T)\bigg|\geq \delta^{\sigma_n+\epsilon/2}.
\end{equation}
Further, we also have $\mu_Y\lesssim \delta^{-\sigma_n-\epsilon}$.
\end{lem}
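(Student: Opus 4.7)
The plan is to unpack the two-limit definition of $\sigma_n$ carefully, fixing parameters in the correct order dictated by the quantifier structure. Fix $\epsilon > 0$. Since $\sigma_n = \inf_{(s,t) \in (0,1)^2} N(s,t)$, I first choose $(s_0, t_0) \in (0,1)^2$ with $N(s_0, t_0) \leq \sigma_n + \epsilon/4$. By the $\limsup$ definition \eqref{presigmadef2}, there exists $\delta_0 > 0$ such that for every $\delta \in (0, \delta_0]$,
\begin{equation*}
    \frac{\log M(s_0, t_0, \delta)}{\log \delta} \leq N(s_0, t_0) + \epsilon/4 \leq \sigma_n + \epsilon/2.
\end{equation*}
Since $\log \delta < 0$, this rearranges to $M(s_0, t_0, \delta) \geq \delta^{\sigma_n + \epsilon/2}$.

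Next I set $\eta = \min(s_0, t_0, \epsilon/2)$ and verify that any $\eta$-extremal collection $(\TT, Y)_\delta$ qualifies as an admissible pair in the infimum defining $M(s_0, t_0, \delta)$. Property (a) of Definition \ref{31} is identical for any choice of $(s,t)$. Property (c) with $s = \eta \leq s_0$ yields $\sum_T |Y(T)| \geq \delta^\eta \geq \delta^{s_0}$, so the $s_0$-version of (c) holds. Property (b) with $t = \eta \leq t_0$ permits at most $\delta^{-\eta} \leq \delta^{-t_0}$ essentially parallel $\rho$-tubes at every scale $\rho$, which is stronger than the $t_0$-version of (b). Consequently
\begin{equation*}
    \bigg| \bigcup_{T \in \TT} Y(T) \bigg| \geq M(s_0, t_0, \delta) \geq \delta^{\sigma_n + \epsilon/2},
\end{equation*}
which establishes \eqref{lbuni}.

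For the multiplicity bound I combine this lower bound with the identity $\mu_Y = \sum_T |Y(T)| / |\bigcup_T Y(T)|$ recorded in \eqref{avgmulti1}. The numerator is bounded easily: since $|Y(T)| \leq |T| \lesssim \delta^{n-1}$ and $\#\TT \lesssim \delta^{1-n-\eta}$ by \eqref{numtubes}, one obtains $\sum_T |Y(T)| \lesssim \delta^{-\eta}$. Dividing gives
\begin{equation*}
    \mu_Y \lesssim \frac{\delta^{-\eta}}{\delta^{\sigma_n + \epsilon/2}} = \delta^{-\sigma_n - \epsilon/2 - \eta} \leq \delta^{-\sigma_n - \epsilon},
\end{equation*}
where the final inequality uses $\eta \leq \epsilon/2$.

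The argument above is essentially a definition chase, so there is no real technical obstacle. The one point worth emphasizing is the quantifier order: both $\eta$ and $\delta_0$ must be chosen as functions of $\epsilon$ (through the auxiliary pair $(s_0, t_0)$), and one cannot hope for a single $\eta$ that works uniformly in $\epsilon$. This is precisely the subtlety that forces this bound to be stated as a separate lemma rather than absorbed into Remark \ref{hugermk}.
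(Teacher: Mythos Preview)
Your proof is correct and follows essentially the same approach as the paper: both choose parameters via the definition of $\sigma_n$ as an infimum of $N(s,t)$, pass through the $\limsup$ to get a $\delta_0$, and then bound $\mu_Y$ by combining the volume lower bound with $\sum_T |Y(T)| \lesssim \delta^{-\eta}$. Your version is a bit more explicit about the monotonicity step (that an $\eta$-extremal collection with $\eta \leq \min(s_0,t_0)$ is admissible in the infimum defining $M(s_0,t_0,\delta)$), whereas the paper simply takes $s_0 = t_0 = \eta$ from the start, but this is a cosmetic difference only.
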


\begin{proof}
    Using the definition of $\sigma_n$ (equation \ref{sigmadef}), we get that for every $\epsilon>0$ there exists $\eta>0$ such that $N(\eta,\eta)<\epsilon/4$. We can also assume that $\eta<\epsilon/2$. Then by choosing $\delta$ small enough, \eqref{presigmadef1} and \eqref{presigmadef2} imply that for every $\eta$-extremal collection $(\TT,Y)_\delta$, we have \eqref{lbuni}. It remains to show the upper bound on the average multiplicity. 
    
    Note that by \eqref{numtubes}, we have $\#\TT \lesssim \delta^{1-n-\eta}$, which further implies that $\sum_{T\in \TT}|Y(T)|\lesssim \delta^{-\eta}\lesssim \delta^{-\epsilon/2}$. Combining this, the above inequality and \eqref{avgmulti1}, we get
    \begin{equation}
    \mu_Y=\frac{\sum_{T\in \TT}|Y(T)|}{|\bigcup_{T\in \TT} Y(T)|}\lesssim \frac{\delta^{-\epsilon/2}}{\delta^{\sigma_n+\epsilon/2}}=\delta^{-\sigma_n-\epsilon}.
    \end{equation}
\end{proof}

Therefore, from the above lemma and remark, we see that $\eta$-extremal collections have a lot of nice properties. Ignoring subpolynomial losses for a moment, we can roughly say extremal collections $(\TT,Y)_\delta$ have $\delta$-tubes pointing in $\delta$-separated directions and have size $\#\TT=\delta^{1-n}$, mass $\sum_{T\in \TT}|Y(T)|=1$, density $\lambda_Y=1$, volume $|\bigcup_{T\in \TT} Y(T)|=\delta^{\sigma_n}$, multiplicity $\mu_Y=\delta^{-\sigma_n}$. Again these are not to be taken literally as they are only roughly true but this intuition is useful to have. We will make frequent use of the inequalities stated above to formalize this intuition when needed.

Using the definition of $\sigma_n$ (equation \eqref{sigmadef}), we can always find extremal collections for small $\delta$. More formally, we have the following.
\begin{lem}\label{extrexist}
    Let $\eta,\delta_0>0$. Then there exists an $\eta$-extremal collection of tubes $(\TT,Y)_\delta$ for some $\delta\in (0,\delta_0]$.
\end{lem}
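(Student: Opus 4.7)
The plan is to unwind the definitions of $\sigma_n$, $N$, and $M$ in sequence and use the characterization of limsup to extract, for any prescribed $\eta$ and $\delta_0$, a scale $\delta \in (0,\delta_0]$ at which the infimum $M(\eta,\eta,\delta)$ is small enough to force the existence of an $\eta$-extremal collection.

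Concretely, I would first note that by \eqref{sigmadef} we have $N(\eta,\eta) \geq \sigma_n$, since $\sigma_n$ is the infimum of $N(s,t)$ over $(s,t) \in (0,1)^2$. Next, by the definition \eqref{presigmadef2} of $N(\eta,\eta)$ as a limsup, for every $\delta_0 > 0$ there exists $\delta \in (0,\delta_0]$ such that
\[
\frac{\log M(\eta,\eta,\delta)}{\log \delta} > N(\eta,\eta) - \eta \geq \sigma_n - \eta.
\]
Since $\log \delta < 0$, this inequality flips on multiplication to give $\log M(\eta,\eta,\delta) < (\sigma_n - \eta)\log \delta$, i.e.
\[
M(\eta,\eta,\delta) < \delta^{\sigma_n - \eta}.
\]

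Finally, by the definition \eqref{presigmadef1} of $M(\eta,\eta,\delta)$ as an infimum over all pairs $(\TT,Y)_\delta$ satisfying Items (a), (b), (c) of Definition \ref{31} with $s = t = \eta$, the strict inequality above allows me to pick an admissible pair $(\TT,Y)_\delta$ whose volume $|\bigcup_{T \in \TT} Y(T)|$ is strictly less than $\delta^{\sigma_n - \eta}$. Such a pair satisfies \eqref{extrprop} and hence is $\eta$-extremal, completing the proof.

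There is no real obstacle here; the only thing to be careful about is the direction of inequalities when dividing by the negative quantity $\log \delta$, and the standard characterization of limsup (that one can find arbitrarily small $\delta$ with $\frac{\log M(\eta,\eta,\delta)}{\log \delta}$ within $\eta$ of its limsup). Both are routine once set up correctly.
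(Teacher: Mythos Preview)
Your argument is correct and is exactly the unwinding of the definition of $\sigma_n$ that the paper has in mind; the paper does not give a proof beyond the remark that the lemma follows from \eqref{sigmadef}. Your care with the sign of $\log\delta$ and the limsup characterization fills in the routine details the paper omits.
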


We also create some notation that is quite handy while working with extremal collections of tubes. In particular, it will be heavily used in Section \ref{pfsec} when we do induction on scales.

\begin{defi}
    We define $V(\eta,\delta)=\inf_{\TT,Y} \big| \bigcup_{T\in \TT} Y(T)\big|$ where we are taking the infimum over all $\eta$-extremal collections $(\TT,Y)_\delta$.
\end{defi}

Having defined extremal collections of tubes and discussed their properties, we are now ready to reduce Theorem \ref{sti2} to a statement involving unions of tubes.

\begin{theo}\label{goal}
    Let $n=4$. Then, for every $\epsilon>0$, there exists a constant $c_\epsilon>0$ and $\eta>0$ such that $V(\eta, \delta)\geq c_\epsilon \delta^{\frac{3}{4}+\epsilon}$. That is to say whenever $(\TT,Y)_\delta$ is an $\eta$-extremal collection of tubes in $\RR^4$, we have
    \begin{equation}\label{mainineq}
        \bigg|\bigcup_{T\in \TT} Y(T)\bigg|\geq c_{\epsilon}\delta^{\frac{3}{4}+\epsilon}.
    \end{equation}
\end{theo}

Now let us see how the above result implies that Theorem \ref{sti2} is true and therefore Theorem \ref{main} as well.

\begin{proof} [Proof of Theorem \ref{sti2} assuming Theorem \ref{goal}.] Because of our hypothesis (where we choose $\eta<\epsilon$) and due to Lemma \ref{extrexist}, we get that
$$
c_\epsilon \delta^{\frac{3}{4}+\epsilon} \leq \bigg|\bigcup_{T\in \TT} Y(T)\bigg| \leq \delta^{\sigma_4-\eta}
$$
for
an infinite sequence of choices of $\delta$ tending  to $0$. Therefore, $\frac{3}{4}+\epsilon\geq \sigma_4-\eta$. This implies $\sigma_4\leq \frac{3}{4}+\epsilon+\eta\leq \frac{3}{4}+2\epsilon$. Now let $\epsilon\to 0$, then we get $\sigma_4\leq \frac{3}{4}$. 
\end{proof}

Therefore, now our goal becomes to prove Theorem \ref{goal} and this is what we will be focusing on proving from here onwards. After doing more build up work in the following sections, Theorem \ref{goal} will be finally proved in Section \ref{pfsec}, establishing Theorem \ref{main} consequently and completing our objective. Lastly, we need a stronger version of multi-scale self-similarity than Property (b) of Definition \ref{31} for our purpose. This can be obtained for extremal collections as we will now state, after making a necessary definition. 
\begin{defi}
Let $(\TT,Y)_\delta$ be a collection of $\delta$-tubes and let $\Tilde{T}$ be a $\rho$-tube that covers each tube in $\TT$. The \textit{unit rescaling} of $(\TT,Y)_\delta$ relative to $\Tilde{T}$ is the pair $(\hat{\TT},\hat{Y})_{\delta/\rho}$ defined as follows. The coaxial lines of the tubes $\hat{T}\in\hat{\TT}$ are the images of the coaxial lines of the tubes $T\in \TT$ under the map $\phi:\RR^n\to \RR^n$, defined as follows. If $\Tilde{l}$ is the coaxial line of $\Tilde{T}$, then $\phi$ is the composition of a rigid transformation that sends the point $\Tilde{l}\cap \{x_n=0\}$ to the origin and sends the line $\Tilde{l}$ to the $x_n$-axis, with the anisotropic dilation
$$
(x_1,\ldots,x_n)\mapsto (cx_1/\rho,cx_2/\rho,\ldots,cx_{n-1}/\rho,cx_n).
$$
Each shading $\hat{Y}(\hat{T})$ is the union of $\delta/\rho$ cubes that intersect $\phi(Y(T))$. The constant $c=c(n)$ is chosen so that the coaxial line of the tubes in $\hat{T}$ are in $\mathcal{L}_n$ and $\hat{Y}(\hat{T})\subset \hat{T}$. See the figure below for a visualization.
\end{defi}

\begin{figure}[h]
\includegraphics[scale=0.5]{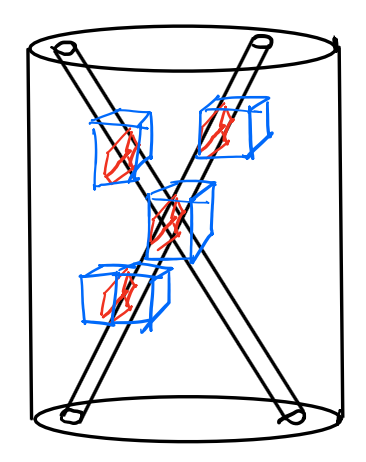}
\centering
\caption{Unit rescaling. For clarity, only a handful of cubes drawn.}
\label{rescalepic}
\end{figure}

We are now ready to state the result about the multi-scale self-similarity of extremal tubes. This is a minor variant of Prop 3.2 in Wang-Zahl \cite{wangzahl}. As we have said, we are borrowing the work done in their paper on discretizing sticky Kakeya sets. Again, we refer the interested reader to Wang-Zahl \cite{wangzahl} to find the proof of this proposition.

\begin{prop}\label{multiscaleprop}
    For all $\epsilon>0$, there exists $\eta>0$ and $\delta_0>0$ so that the following holds for all $\delta\in (0,\delta_0]$. Let $(\TT,Y)_\delta$ be an $\eta$-extremal collection of tubes, and let $\rho\in[\delta^{1-\epsilon},\delta^\epsilon]$. Then there is a refinement $(\TT',Y')_\delta$ of $(\TT,Y)_\delta$ and a cover $(\Tilde{\TT},\Tilde{Y})_\rho$ of $(\TT',Y')_\delta$ with the following properties.
    \begin{enumerate}[(i)]
         \item Each tube $T\in \TT'$ is covered by a single tube from $\Tilde{\TT}$.
        \item $(\Tilde{\TT},\Tilde{Y})_\rho$ is $\epsilon$-extremal.
        \item For each $\Tilde{T}\in \Tilde{\TT}$, the unit rescaling of $(\TT'[\Tilde{T}],Y')_\delta$ relative to $\Tilde{T}$ is $\epsilon$-extremal.
        \item For each $\Tilde{Q}\in \mathcal{Q}(\Tilde{Y})$, we have $\#\Tilde{\TT}_{\Tilde{Y}}(\Tilde{Q})\lesssim \rho^{-\sigma_n-\epsilon}$. 
        \item For each $Q\in \mathcal{Q}(Y')$ and $\Tilde{T}\in \Tilde{\TT}$, we have $\#\TT'[\Tilde{T}]_{Y'}(Q)\lesssim (\delta/\rho)^{-\sigma_n-\epsilon}$. Furthermore, there is a number $\mu_{\text{fine}}$ such that for each $\delta$-cube $Q\subset E_{\TT'[\Tilde{T}]}$, we have $\#\TT'[\Tilde{T}]_{Y'}(Q)\sim \mu_{\text{fine}}$. 
        \item There exists a constant $w>0$ such that $\sum_{T\in \TT'[\Tilde{T}]} |Y'(T)\cap \Tilde{Q}|\sim w$ for each $\Tilde{T}\in \Tilde{\TT}$ and $\Tilde{Q}\in \Tilde{Y}(\Tilde{T})$. Furthermore using (iv), we get that $|E_{\TT'[\Tilde{T}]\cap \Tilde{Q}}|\sim \frac{w}{\mu_{\text{fine}}}$. 
    \end{enumerate}
\end{prop}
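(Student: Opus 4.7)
The plan is to follow the construction of Proposition~3.2 in Wang--Zahl \cite{wangzahl}, building $(\Tilde{\TT}, \Tilde{Y})_\rho$ and the refinement $(\TT', Y')_\delta$ through a sequence of pigeonholing steps. Each step costs at most a $|\log\delta|^{-O(1)}$ factor of mass; by Remark~\ref{smolrmk} this degrades the extremality parameter only slightly, so by choosing $\eta$ small enough compared to $\epsilon$ (and $\delta_0$ small enough that every logarithmic factor is dominated by $\delta^{-\eta}$) all such losses will be absorbed into the $\epsilon$-extremality conclusions at the coarse and rescaled scales.

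First I would set up the initial $\rho$-cover and obtain Properties (i) and (ii). Property (b) of Definition~\ref{31} at scale $\rho$ provides an essentially distinct collection of $\rho$-tubes covering $\TT$, with at most $\delta^{-\eta}\le\rho^{-\epsilon}$ essentially parallel to any given one once $\eta\ll\epsilon$. For each $T\in\TT$ I select a single covering $\Tilde{T}$, giving Property (i), and define $\Tilde{Y}(\Tilde{T})$ as the union of $\rho$-cubes meeting $\bigcup_{T\in\TT[\Tilde{T}]}Y(T)$. Items (a),(b) of Definition~\ref{31} at scale $\rho$ are inherited; the mass bound follows from $\sum_{\Tilde{T}}|\Tilde{Y}(\Tilde{T})|\gtrsim\sum_T|Y(T)|\ge\delta^\eta\ge\rho^\epsilon$, and the upper volume bound of \eqref{extrprop} at scale $\rho$ follows from $|\bigcup Y(T)|\le\delta^{\sigma_n-\eta}$ together with the definition of $\sigma_n$ applied to the $\rho$-cover.

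Properties (iii)--(vi) are then obtained by successive refinements. For Property (iii), I pigeonhole over $\Tilde{T}\in\Tilde{\TT}$ by the extremality defect of the unit rescaling of $(\TT[\Tilde{T}],Y)$ relative to $\Tilde{T}$, discarding those $\Tilde{T}$ for which this rescaling fails to be $\epsilon$-extremal; the definition of $\sigma_n$ guarantees that only a $|\log\delta|^{-O(1)}$-fraction of the coarse mass is lost. Properties (iv) and (v) follow from two applications of Lemma~\ref{constaref}, first at scale $\rho$ on $\Tilde{\TT}$ and then fibrewise at scale $\delta$ inside each $\TT[\Tilde{T}]$. Property (vi) --- the uniform $\delta$-mass $w$ in each incidence pair $(\Tilde{Q},\Tilde{T})$ --- comes from a dyadic partitioning of these pairs by the value of $\sum_{T\in\TT[\Tilde{T}]}|Y(T)\cap\Tilde{Q}|$, retaining the dominant bin. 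Each refinement at scale $\rho$ is pulled back to scale $\delta$ via Definition~\ref{canondef}; by Remark~\ref{canonrmk}, Properties (i) and (vi) place us in exactly the setting where the canonical refinement loses at most a constant factor of $\delta$-mass.

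The main obstacle is sequencing the pigeonholings compatibly so that later steps do not undo conditions established earlier: enforcing constant multiplicity at scale $\delta$ via Lemma~\ref{constaref} can destroy the uniform-mass condition (vi) if performed in the wrong order. The correct sequence is to establish (vi) first via the dyadic partition, then (v) fibrewise (so that the mass loss inside each $(\Tilde{Q},\Tilde{T})$ pair is uniform across pairs and thus preserves (vi)), then (iv) at the coarse scale, and finally (iii), which only requires knowing extremality of the rescaled collections. Tracking this compatibility and the interaction between the coarse extremality parameter $\epsilon$ and the fine parameter $\eta$ across two scales is the technical heart of the argument, and is carried out in full detail in Wang--Zahl \cite{wangzahl}.
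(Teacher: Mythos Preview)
The paper does not supply its own proof of this proposition: it identifies the statement as a minor variant of Proposition~3.2 in Wang--Zahl \cite{wangzahl} and refers the reader there for the argument. Your proposal is precisely to outline that Wang--Zahl construction, so you are aligned with the paper's treatment; your sketch of the sequence of pigeonholings (cover, uniform $\delta$-mass per coarse incidence pair, constant fine multiplicity, coarse multiplicity control, extremality of rescalings) captures the shape of their argument and correctly defers the compatibility bookkeeping to \cite{wangzahl}.
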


\begin{remark}\label{multiscalermk}
    The Properties (ii) and (iii) allow us to apply induction on scales on extremal collections of tubes. The average multiplicity of an extremal collection of $\delta$-tubes is roughly $\delta^{-\sigma_n}$ as we saw in Remark \ref{hugermk}. Therefore, Properties (iv) and (v) say that the multiplicity at any point is roughly average, whether we are considering the coarse multiplicity $\mu_{\text{coarse}}$ at the scale $\rho$ or the multiplicity $\mu_\delta$ at scale $\delta$.  These properties also allow us to justify \eqref{multieq} which stated $\mu_\delta=\mu_{\text{fine}}\mu_{\text{coarse}}$. This regularity will be most useful to us in both Sections \ref{mainlemsec} and \ref{pfsec}. Observe that (iv) and (v) are upper bounds and these may be combined together effectively and converted into lower bounds on volumes of unions of tubes. This strategy will be used in Section \ref{pfsec} where we actually prove Theorem \ref{goal} using induction on scales. The Property (vi) tells us that the smaller $\delta$ cubes are not arbitrarily distributed within the larger $\rho$-cubes. As discussed in Remark \ref{canonrmk}, this property together with (i) tells us that canonical refinements lose the same fraction of mass as the refinement at the larger scale.
\end{remark}

\section{Trilinear and plany cases for sticky Kakeya sets}\label{mainlemsec}

From here onwards, we shall always work in the ambient space $\RR^4$ as this is the dimension we are concerned with. In this section, we shall prove a lemma which splits our sticky Kakeya set into the trilinear and plany cases, critically using the multi-scale self-similarity coming from Proposition \ref{multiscaleprop}. This lemma plays a key role when we do induction on scales in Section \ref{pfsec} where we prove Theorem \ref{goal}. However, before we get to this lemma, first we state a proposition which we will need to prove our lemma. Informally, what this proposition tells us is that a given set of unit vectors in $\RR^4$ is either trilinear or it is plany. The precise formulation of what trilinear and plany mean in this context will be presented in the statement of the lemma itself. Once we understand how to split between the trilinear and plany case for the more basic situation of  unit vectors in $\RR^4$, we can then move to the more complicated situation of dealing with tubes coming from sticky Kakeya sets. Note that this is a special subcase of Lemma 2.2 from Zahl \cite{zahl2}.

\begin{prop}\label{unilines}
    Let $U\subset S^3$  be a multiset of unit vectors and let $0\leq \rho \leq 1$. Then at least one of the following must hold:
    \begin{equation}\label{case1}
        \#\{(u_1,u_2,u_3)\in U^3: |u_1\wedge u_2 \wedge u_3|\gtrsim \rho^2\}\gtrsim (\# U)^3,
    \end{equation}
    or there is a 2-plane $H$ such that
    \begin{equation}\label{case2}
        \#\{u\in U: \angle (u, H)\lesssim \rho\} \gtrsim (\#U).
    \end{equation}
\end{prop}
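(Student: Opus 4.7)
The proof is a two-step pigeonhole dichotomy built on the geometric identity
\[
|u_1 \wedge u_2 \wedge u_3| \;=\; |u_1 \wedge u_2| \cdot \sin\angle(u_3, H_{12}),
\]
where $H_{12}$ denotes the $2$-plane spanned by $u_1$ and $u_2$. The plan is to assume \eqref{case1} fails with suitable absolute constants and then to locate the required $2$-plane for \eqref{case2}.

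Fix constants $C>0$ and $c\in(0,\tfrac{1}{2})$, and assume that at most $c(\#U)^3$ triples satisfy $|u_1\wedge u_2\wedge u_3|\geq C\rho^2$; then at least $(1-c)(\#U)^3$ triples are ``small-wedge''. Pigeonholing over the first coordinate, I would fix $u_1\in U$ for which at least $(1-c)(\#U)^2$ pairs $(u_2,u_3)\in U^2$ are small-wedge. With such a $u_1$ in hand, the analysis reduces to deciding whether small wedges arise from $u_3$'s lying near a single $2$-plane, or from $u_2$'s crowding near the line $\RR u_1$.

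I would then split into two subcases. \textbf{Transverse case:} there exists $u_2\in U$ with $|u_1\wedge u_2|\geq C\rho$ such that at least $\tfrac{1}{2}\#U$ of the $u_3\in U$ yield a small-wedge triple. The identity then forces $\sin\angle(u_3,H_{12})\leq\rho$ for each such $u_3$, so $\gtrsim\#U$ vectors lie within angle $\lesssim\rho$ of $H:=H_{12}$, yielding \eqref{case2}. \textbf{Near-parallel case:} otherwise, every $u_2$ with $|u_1\wedge u_2|\geq C\rho$ contributes fewer than $\tfrac{1}{2}\#U$ small-wedge partners, so the small-wedge count over all such $u_2$ is $<\tfrac{1}{2}(\#U)^2$. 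Subtracting from the $(1-c)(\#U)^2$ lower bound forces at least $(\tfrac{1}{2}-c)\#U$ vectors $u_2\in U$ with $|u_1\wedge u_2|\leq C\rho$, i.e., $u_2$ lying within angle $\lesssim\rho$ of the line $\RR u_1$. Any $2$-plane $H$ containing this line captures all such $u_2$'s within angle $\lesssim\rho$, again delivering \eqref{case2}.

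The main (and essentially only) obstacle is bookkeeping the implicit constants: one must match the constant hidden in the ``$\gtrsim\rho^2$'' of \eqref{case1} with the threshold $C\rho$ separating transverse from near-parallel pairs, so that the geometric identity converts a small wedge cleanly into the angle bound $\lesssim\rho$ needed by \eqref{case2}. No deeper tool is required---this is a soft multilinear-to-linear conversion and does not rely on any Kakeya-specific input.
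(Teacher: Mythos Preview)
Your argument is correct. The two-step pigeonhole together with the volume identity $|u_1\wedge u_2\wedge u_3|=|u_1\wedge u_2|\cdot\sin\angle(u_3,H_{12})$ cleanly separates the transverse and near-parallel subcases, and the constant-matching you flag is routine (choosing the threshold $C\rho$ in $|u_1\wedge u_2|$ to match the $C\rho^2$ in the wedge bound makes the division in the transverse case land exactly on $\rho$).

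The paper does not give its own proof of this proposition; it simply quotes it as a special case of Lemma~2.2 in Zahl's paper \cite{zahl2} on unions of lines in $\RR^n$. That lemma is a more general $k$-linear-versus-$(k-1)$-plany dichotomy in $\RR^n$, proved by a similar pigeonhole-and-project mechanism but stated so as to cover all $k$ and $n$ at once. Your direct argument for the $k=3$, $n=4$ case is self-contained and arguably clearer for the present purpose, since it avoids importing the extra parameters and normalizations of the general statement; the trade-off is that the cited lemma would be reusable if one wanted analogous dichotomies in other dimensions.
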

In the above statement, \eqref{case1} corresponds to $U$ being a trilinear collection of unit vectors and \eqref{case2} corresponds to $U$ being a plany collection of unit vectors. While we have spoken about planyness, we have never formally defined it for a collection of tubes. Therefore, we do that now.

\begin{defi}\label{planydef}
    Let $(\Tilde{\TT},\Tilde{Y})_\rho$ be a set of $\rho$-tubes and their associated shading. We say that $(\Tilde{\TT},\Tilde{Y})_\rho$ is \textit{plany} if for each $\rho$-cube $\Tilde{Q}$, there is a plane $\Pi(\Tilde{Q})$ so that for all $\Tilde{T}\in \Tilde{\TT}_{\Tilde{Y}}(\Tilde{Q})$ we have $\angle (v(\Tilde{T}),\Pi(\Tilde{Q})\lesssim \rho$.
\end{defi}

The above definition is quite a strong one. For such collections, Katz-Zahl \cite{katzzahl} obtained the strong dimension bound of 10/3. We shall state their result as Theorem \ref{planebrush} in this paper. Planiness as defined above will be a key hypothesis in the statement. Now we are ready to state our main lemma which splits sticky Kakeya sets into trilinear and plany cases. 

\begin{lem}\label{mainlem}
    For all $\epsilon>0$ there exists constants $\eta>0$ and $\delta_0>0$ such that the following holds for all $\delta\in (0,\delta_0]$. Let $(\TT,Y)_\delta$ be an $\eta$-extremal collection of tubes and let $\rho\in [\delta^{1-\epsilon},\delta^\epsilon]$. Then one of the following must be true.
    \begin{enumerate}[(a)]
        \item There exists a refinement $(\TT',Y')_\delta$ of $(\TT,Y)_\delta$ such that for each $Q\in \mathcal{Q}(Y')$,
        $$
         \#\{(T_1,T_2,T_3)\in \TT'_{Y'}(Q)^3: |v(T_1)\wedge v(T_2)\wedge v(T_3)|\gtrsim \rho^2 \}\gtrsim (\#\TT'_{Y'}(Q))^3.
        $$
        \item There exists a $\delta^{23\eta+2\epsilon^3}$-refinement $(\TT',Y')_\delta$ of $(\TT,Y)_\delta$, a cover $(\Tilde{\TT},\Tilde{Y})_\rho$ of $(\TT',Y')_\delta$ with the following properties.
        \begin{enumerate}[(i)]
        \item Each tube $T\in \TT'$ is covered by a single tube from $\Tilde{\TT}$.
        \item $(\Tilde{\TT},\Tilde{Y})_\rho$ is $\epsilon$-extremal.
        \item For each $\Tilde{T}\in \Tilde{\TT}$, the unit rescaling of $(\TT'[\Tilde{T}],Y')_\delta$ relative to $\Tilde{T}$ is $\epsilon$-extremal.
        \item For each $\Tilde{Q}\in \mathcal{Q}(\Tilde{Y})$, we have $\#\Tilde{\TT}_{\Tilde{Y}}(\Tilde{Q})\lesssim \rho^{-\sigma_4-\epsilon}$.
        \item For each $Q\in \mathcal{Q}(Y')$ and $\Tilde{T}\in \Tilde{\TT}$, we have $\#\TT'[\Tilde{T}]_{Y'}(Q)\lesssim (\delta/\rho)^{-\sigma_n-\epsilon}$. Furthermore, there is a number $\mu_{\text{fine}}$ such that for each $\delta$-cube $Q\subset E_{\TT'[\Tilde{T}]}$, we have $\#\TT'[\Tilde{T}]_{Y'}(Q)\sim \mu_{\text{fine}}$. 
        \item $(\Tilde{\TT},\Tilde{Y})_\rho$ is plany.
        \end{enumerate}
    \end{enumerate}
\end{lem}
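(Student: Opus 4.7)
The plan is to apply Proposition \ref{multiscaleprop} to install the multi-scale structure at scale $\rho$, then apply Proposition \ref{unilines} cube-by-cube at the $\delta$-scale to perform the trilinear/plany dichotomy, and finally promote the plany conclusion from the $\delta$-scale to the $\rho$-scale by a selection argument based on the regularity built into Proposition \ref{multiscaleprop}.

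First I would invoke Proposition \ref{multiscaleprop} with a small parameter (say $\epsilon^3$, chosen so that the cumulative polynomial losses fit inside $\delta^{23\eta+2\epsilon^3}$) to obtain a refinement $(\TT',Y')_\delta$ of $(\TT,Y)_\delta$ and a cover $(\Tilde{\TT},\Tilde{Y})_\rho$ satisfying properties (i)--(vi) of that proposition. Properties (i)--(v) already match conclusions (b)(i)--(v) of the lemma verbatim, so only planyness (b)(vi) needs to be installed. For each $\delta$-cube $Q\in\mathcal{Q}(Y')$, apply Proposition \ref{unilines} to the direction multiset $U(Q)=\{v(T):T\in\TT'_{Y'}(Q)\}$ with parameter $\rho$. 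Each $Q$ falls into one of two classes: \emph{trilinear} (many triples through $Q$ satisfy \eqref{case1}) or \emph{plany} (a constant fraction of directions are $O(\rho)$-close to some $2$-plane $H(Q)$). Pigeonholing the $\delta$-cubes by their incidence mass $\#\TT'_{Y'}(Q)$, one of the two classes retains at least half of $\sum_{T\in\TT'}|Y'(T)|$. In the trilinear case, restricting to the trilinear $\delta$-cubes yields conclusion (a) immediately.

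In the plany case, at each plany $Q$ I would discard tubes in $\TT'_{Y'}(Q)$ whose directions are not $\lesssim \rho$-close to $H(Q)$, losing only a constant factor. The essential remaining task is to choose, for each $\rho$-cube $\Tilde{Q}\in\mathcal{Q}(\Tilde{Y})$, a single plane $\Pi(\Tilde{Q})$ satisfying $\angle(v(\Tilde{T}),\Pi(\Tilde{Q}))\lesssim\rho$ for every surviving $\Tilde{T}\in\Tilde{\TT}_{\Tilde{Y}}(\Tilde{Q})$, as demanded by Definition \ref{planydef}. I would find a $\delta$-cube $Q_\ast\subset\Tilde{Q}$ such that $Q_\ast\in E_{\TT'[\Tilde{T}]}$ for a constant fraction of the $\rho$-tubes $\Tilde{T}\in\Tilde{\TT}_{\Tilde{Y}}(\Tilde{Q})$, and set $\Pi(\Tilde{Q})=H(Q_\ast)$. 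Such a $Q_\ast$ exists by averaging over the $\sim(\rho/\delta)^4$ many $\delta$-cubes in $\Tilde{Q}$, using property (vi) of Proposition \ref{multiscaleprop} to lower bound $|E_{\TT'[\Tilde{T}]}\cap\Tilde{Q}|$ uniformly in $\Tilde{T}$. Any $\Tilde{T}$ with $Q_\ast\in E_{\TT'[\Tilde{T}]}$ covers a $\delta$-tube $T$ through $Q_\ast$ with $\angle(v(T),H(Q_\ast))\lesssim\rho$, and then the cover condition $\angle(v(T),v(\Tilde{T}))\lesssim\rho$ gives $\angle(v(\Tilde{T}),\Pi(\Tilde{Q}))\lesssim\rho$. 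Discarding the remaining $\rho$-tubes and passing to the canonical refinement of $(\TT',Y')_\delta$ (Definition \ref{canondef}), Remark \ref{canonrmk} applies since both (i) and (vi) of Proposition \ref{multiscaleprop} are in force, so the canonical refinement loses mass only by the same factor as the $\rho$-scale refinement. Combining all the polynomial losses from this selection, from the pigeonhole, and from the subpolynomial refinement in Proposition \ref{multiscaleprop} yields a $\delta^{23\eta+2\epsilon^3}$-refinement satisfying (b).

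The main obstacle is precisely the selection of $Q_\ast$ together with the bookkeeping of refinements: one must verify that enough $\rho$-tubes through $\Tilde{Q}$ concentrate on a single $\delta$-cube so planyness with a \emph{single} plane (rather than a separate plane per $\delta$-cube) can be enforced, and simultaneously that the downstream canonical refinement preserves properties (b)(ii)--(v). The regularity from Proposition \ref{multiscaleprop} (specifically the uniform fine multiplicity $\mu_{\text{fine}}$ from (v) and the uniform $\delta$-mass $w$ per incidence pair from (vi)) is exactly what makes this feasible; its absence is essentially why Katz--Zahl obtain a weaker dichotomy, and hence a weaker bound, for general non-sticky Kakeya sets.
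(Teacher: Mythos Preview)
Your overall architecture is right, but the selection of $Q_\ast$ contains a genuine gap: the claimed ``constant fraction'' is false. Property (vi) of Proposition~\ref{multiscaleprop} gives $|E_{\TT'[\Tilde{T}]}\cap\Tilde{Q}|\sim w/\mu_{\text{fine}}$, and combining \eqref{A12} with the bound $\mu_{\text{fine}}\lesssim(\delta/\rho)^{-\sigma_4-\epsilon}$ from (v) one finds $w/\mu_{\text{fine}}\sim (\delta/\rho)^{\sigma_4}\,|\Tilde{Q}|$ up to subpolynomial factors. Thus each fine $E$-set occupies only a $(\delta/\rho)^{\sigma_4}$-fraction of $\Tilde{Q}$, and your averaging over the $(\rho/\delta)^4$ many $\delta$-cubes in $\Tilde{Q}$ produces a $Q_\ast$ lying in at most a $(\delta/\rho)^{\sigma_4}$-fraction of the $\rho$-tubes' $E$-sets. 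This is a genuine polynomial loss in $\delta$, not absorbable into the $\delta^{23\eta+2\epsilon^3}$ budget.

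The paper repairs this by a different mechanism. Since $\mu_{\text{fine}}$ is constant by (v), one has $\#\{\Tilde{T}:Q\in E_{\TT'[\Tilde{T}]}\}\sim \#\TT'_{Y'}(Q)/\mu_{\text{fine}}$; so rather than averaging, the paper selects a $\delta$-cube $Q$ of near-maximal $\delta$-multiplicity $\#\TT'_{Y'}(Q)\gtrsim\mu_{Y'}\sim\delta^{-\sigma_4}$. Dividing by $\mu_{\text{fine}}\lesssim(\delta/\rho)^{-\sigma_4}$ then gives $\gtrsim\rho^{-\sigma_4}$ many $\rho$-tubes through $Q$, which is a subpolynomial fraction of $\mu_{\Tilde{Y}}$ and does fit the budget. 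A separate counting argument (the sets $S_1,\Tilde{S}_1$ in the paper) is then needed to show that such high-multiplicity $\delta$-cubes are distributed across nearly all $\rho$-cubes, not concentrated in a few. There is also an order-of-operations issue: for this selection to yield a $Q$ that already carries a plane $H(Q)$, and for properties (v),(vi) to be intact when you use them, the $\delta$-scale plany refinement must be performed \emph{before} invoking Proposition~\ref{multiscaleprop}, not after as in your sketch; the plany-at-$\delta$ property survives the subsequent multiscale refinement because it is stable under passing to sub-shadings.
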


That completes the statement of the lemma. Before we get into the proof of the lemma, it is useful to discuss the statement a bit first. As we saw in the sketch in Subsection \ref{pfske}, the proof proceeds by splitting the collection of $\delta$-tubes into the trilinear case and the plany case. In the above statement, the trilinear case corresponds to (a) and the plany case corresponds to (b). We will use Proposition \ref{unilines} to split $(\TT,Y)_\delta$ between the two cases. As we shall shortly see in the proof, using Proposition \ref{unilines} we get that either the collection is trilinear which corresponds to (a) or that the $\delta$-tubes through any point would lie in a $\rho$-neighborhood of a plane. In the latter case, by using Proposition \ref{multiscaleprop}, we can show that the fatter tubes at scale $\rho$ are plany in terms of Definition \ref{planydef}. Even though we get a strong property on the scale $\delta$ using \eqref{case2}, to get the required plany property at scale $\rho$ takes a fair bit of technical work. This is roughly how the proof of the lemma goes.

When we actually are proving Theorem \ref{goal} in Section \ref{pfsec}, the trilinear case corresponding to (a) can be easily dealt with by applying an estimate from Guth-Zahl \cite{guthzahl} which is stated in this paper as Theorem \ref{trikak}. The plany case corresponding to (b) will be more complicated to deal with there as well. Because of the hard-earned planyness at scale $\rho$, we can apply planebrush bound i.e Theorem \ref{planebrush} from Katz-Zahl \cite{katzzahl}. However, we ultimately require a bound at scale $\delta$. For this, note that part (b) also gives us multi-scale self-similarity properties which are basically the properties in Proposition \ref{multiscaleprop}. Therefore, in this situation we will apply induction on scales along with the planebrush bound. Full detail will of course be provided when we do the actual proof in Section \ref{pfsec}. Now that we have a sketch and motivation for this lemma, let us actually prove it.

\begin{proof}[Proof of Lemma \ref{mainlem}]

The first thing we would like to do is create some regularity in the multiplicity so that we can apply Proposition \ref{unilines} more effectively. Therefore, using Lemma \ref{constaref}, we first do a constant multiplicity refinement on the $\eta$ (to be chosen later) extremal collection $(\TT,Y)_\delta$ to obtain the collection $(\TT_1,Y_1)_\delta$. Then for each $Q\in \mathcal{Q}(Y_1)$ we apply Lemma \ref{unilines} to the set of directions of the coaxial lines of the tubes in $(\TT_1)_{Y_1}(Q)$. We have the same number of unit vectors/tubes for each cube because of our constant multiplicity refinement and this will be quite useful for us as we shall now see. Suppose the majority of the cubes in $\mathcal{Q}(Y_1)$ fell into the case given by \eqref{case1}, then we drop the remaining cubes. This causes us to lose at most half the cubes, and since the cubes all have constant multiplicity, this is a valid refinement which we denote as $(\TT',Y')_\delta$. Note that $(\TT',Y')_\delta$ satisfies (a) of the lemma. Therefore, in this scenario which is the trilinear case, we are done. 

Now we need to show that (b) holds in the alternative scenario, namely the plany case. The trilinear case i.e. part (a) of the lemma took only a paragraph and two refinements to prove. However the plany case will require five pages and also five refinements to prove. There is quite a contrast in difficulty between the two cases. Let us get started with the long process now. 

In the alternative scenario the majority of the cubes fell into the case given by \eqref{case2}. Therefore, just like before, we keep these cubes only and drop the remaining cubes. Recall that as we saw in \eqref{refineineqlem2}, a shading is essentially a collection of cube-tube incidence pairs. Keeping this in mind, we refine our shading further. Let $Q$ be a cube we have kept in the previous step. Now, among the tubes $T$ incident on $Q$ (that is when $Q\subset Y_1(T)$) we only keep the incidence pairs $(Q,T)$ for which the direction $u$ of the coaxial line of $T$ satisfies $\angle (u, H_Q)\lesssim \rho$. Here $H_Q$ denotes the 2-plane associated to $Q$. Doing this ensures that all the $\delta$-tubes passing any $\delta$-cube lie in the $\rho$-neighborhood of a 2-plane. Moreover, this will be a refinement as we retain a constant fraction of mass at both steps (this is due to the fact that $(\TT_1,Y_1)$ had constant multiplicity property and secondly due to the lower bound property in \eqref{case2}). Also conveniently, applying any further refinement will automatically preserve this property. We call the refined collection we get $(\TT_2,Y_2)_\delta$. Because of the refinements we have done, $(\TT_2,Y_2)_\delta$ is ${2\eta}$-extremal by Remark \ref{smolrmk}. Note that our goal as reflected in Property (vi) of (b) is to show planyness in terms of Definition \ref{planydef} at a larger scale $\rho$, so we will need to use this property we just established to eventually get (vi). Note that as of now, there are no $\rho$-tubes in the picture at all, so let us introduce them next.

By choosing $\eta,\delta_0$ sufficiently small we apply Proposition \ref{multiscaleprop} with $\epsilon^3/4$ in place of $\epsilon$ to obtain a refinement of $(\TT_2,Y_2)_\delta$ which we will call $(\TT_3,Y_3)_\delta$ and a cover $(\Tilde{\TT}_3,\Tilde{Y}_3)_\rho$ with properties as described in the statement of the proposition. We chose the smaller number $\epsilon^3/4$ in place of $\epsilon$ to give ourselves some wiggle room for the forthcoming technical manipulations. At this stage we have (i), (ii), (iii), (iv), (v) of the lemma as they are the same as (i), (ii), (iii), (iv), (v) of Proposition \ref{multiscaleprop}. Therefore, we need to now work on establishing (vi) which is the planyness condition at scale $\rho$ while also making sure we don't lose the other properties currently in hand as we make further refinements.

We had previously shown that the $\delta$-tubes passing any $\delta$-cube are contained in the $\rho$-neighborhood of a 2-plane and this property still holds now because of its stability under refinements. However, when we move up to scale $\rho$, a $\rho$-cube could have multiple $\delta$-cubes inside of it each of which have different associated 2-planes (see Figure \ref{enemypic}). Therefore, we cannot directly conclude that $(\Tilde{\TT}_3,\Tilde{Y}_3)_\rho$ is plany. To handle this situation, we will first show there is a high multiplicity $\delta$-sub cube inside most of our $\rho$-cubes. Then we can argue that for these $\rho$-cubes, there are many $\rho$-tubes passing through it which contain the corresponding high multiplicity $\delta$-sub cube inside it. Retaining only these $\rho$-tubes finally gives us planyness at scale $\rho$.

\begin{figure}[h]
\includegraphics[scale=0.4]{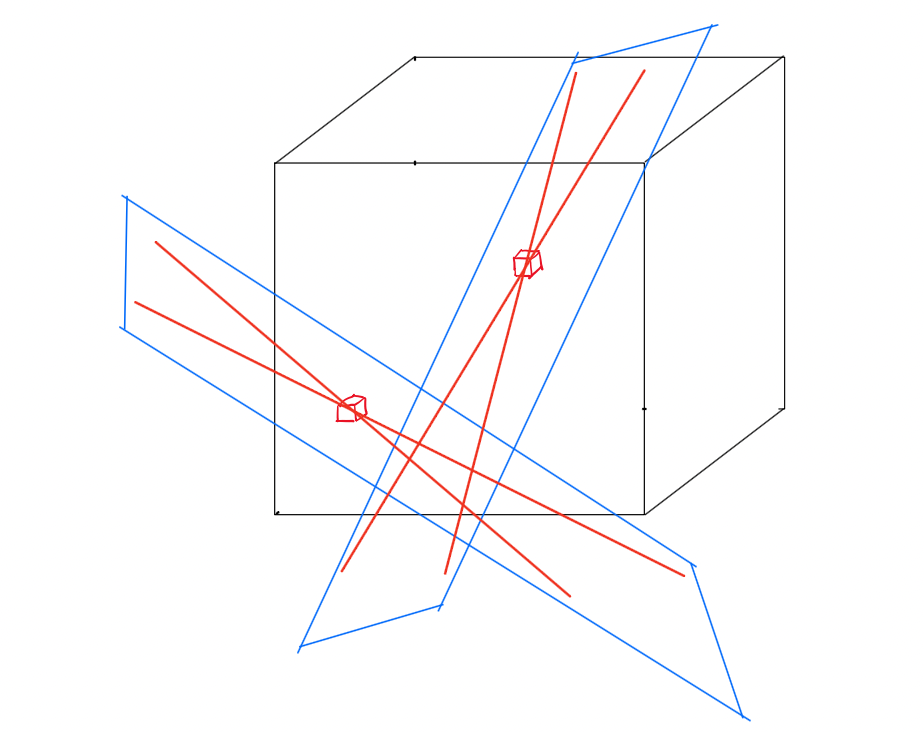}
\centering
\caption{The bad scenario of multiple 2-planes associated to different $\delta$-sub cubes.}
\label{enemypic}
\end{figure}

Let us expand on this heuristic some more using the rough multiplicity relation \eqref{multieq} which stated that $\mu_\delta=\mu_{\text{fine}}\mu_{\text{coarse}}$. Recall that this was justified by the Properties (iv) and (v) of Proposition \ref{multiscaleprop} as was discussed in Remark \ref{multiscalermk}. Now, consider a $\rho$-cube $\Tilde{Q}$ and its sub-cube $Q$ at scale $\delta$. The multiplicity at $Q$ is $\mu_\delta$ and all these $\delta$-tubes are contained in a $\rho$-neighborhood of a plane. However, at most $\mu_{\text{fine}}$ many of these tubes can be covered by a single $\rho$-tube. Therefore to cover these $\mu_\delta$ many $\delta$-tubes, we need at least $\mu_\delta/\mu_{\text{fine}}$ many $\rho$-tubes. We see that this number is simply $\mu_{\text{coarse}}$ by using the heuristic equation $\mu_\delta=\mu_{\text{fine}}\mu_{\text{coarse}}$. Furthermore, all of these $\mu_{\text{coarse}}$ many $\rho$-cubes incident on $\Tilde{Q}$ are contained in the $\rho$-neighborhood of the 2-plane corresponding to $Q$, giving us planyness. Obviously, this and the previous paragraph are only heuristics. We shall now proceed to make this intuitive argument rigorous.

Since the following arguments/calculations are very technical, we would first like to compile a series of master equations regarding $(\TT_3,Y_3)_\delta$ and $(\Tilde{\TT}_3,\Tilde{Y}_3)_\rho$ that will be used a lot in the rest of the proof of this lemma. All of these inequalities were derived in Remark \ref{hugermk} and Lemma \ref{hugermk2}, where we discussed the size, mass, density, volume, multiplicity of extremal sets. Observe that  $(\TT_3, Y_3)_\delta$ is a refinement of the $2\eta$-extremal collection $(\TT_2,Y_2)_\delta$, so by Remark \ref{smolrmk} we get that $(\TT_3, Y_3)_\delta$ is $3\eta$-extremal. Furthermore, by Property (ii) of Proposition \ref{multiscaleprop}, $(\Tilde{\TT}_3,\Tilde{Y}_3)_\rho$ is $\epsilon^3/4$-extremal. Now by applying Remark \ref{hugermk} and Lemma \ref{hugermk2}, we get the following inequalities.
\begin{equation}\label{A1}
    \delta^{-3+3\eta}\lesssim \#\TT_3 \lesssim \delta^{-3-3\eta}.
\end{equation}
\begin{equation}\label{A2}
    \delta^{3\eta}\lesssim\sum_{T\in \TT_3} |Y_3(T)|\lesssim \delta^{-3\eta}. 
\end{equation}
\begin{equation}\label{A3}
    \lambda_{Y_3}\gtrsim \delta^{6\eta}.
\end{equation}
\begin{equation}\label{A4}
    \delta^{\sigma_4+\epsilon^3/4}\lesssim \bigg|\bigcup_{T\in \TT} Y_3(T)\bigg|\lesssim \delta^{\sigma_4-3\eta}.
\end{equation}
\begin{equation}\label{A5}
    \delta^{-\sigma_4+6\eta}\lesssim\mu_{Y_3}\lesssim \delta^{-\sigma_4-\epsilon^3/4}.
\end{equation}

Similarly for $(\Tilde{\TT}_3,\Tilde{Y}_3)_\rho$, we get
\begin{equation}\label{B1}
    \rho^{-3+\epsilon^3/4}\lesssim \#\Tilde{\TT}_3 \lesssim \rho^{-3-\epsilon^3/4}.
\end{equation}
\begin{equation}\label{B2}
    \rho^{\epsilon^3/4}\lesssim\sum_{\Tilde{T}\in \Tilde{\TT}_3} |\Tilde{Y}_3(\Tilde{T})|\lesssim \rho^{-\epsilon^3/4}. 
\end{equation}
\begin{equation}\label{B3}
    \lambda_{\Tilde{Y}_3}\gtrsim \rho^{\epsilon^3/2}.
\end{equation}
\begin{equation}\label{B4}
    \bigg|\bigcup_{\Tilde{T}\in \Tilde{\TT}_3} \Tilde{Y}_3(\Tilde{T})\bigg|\lesssim \rho^{\sigma_4-\epsilon^3/4}.
\end{equation}
\begin{equation}\label{B5}
    \rho^{-\sigma_4+2\epsilon^3}\lesssim\mu_{\Tilde{Y}_3}\lesssim \rho^{-\sigma_4-\epsilon^3/4}.
\end{equation}
In the last equation i.e \eqref{B5}, we actually used Property (iv) of Proposition \ref{multiscaleprop} to get the right hand side but everything else follows directly from the inequalities in Remark \ref{hugermk} and Lemma \ref{hugermk2}. We omitted the left hand side of the penultimate equation i.e. \eqref{B4} as it would involve introducing a new parameter larger than $\epsilon$ and we don't require this side of the inequality for our purposes.

Equipped with all of these inequalities now, let us now move forward towards our objectives. First combining the Properties (iv) and (v) from Proposition \ref{multiscaleprop} we get that for any $Q\in \mathcal{Q}(Y_3)$ we have 
\begin{equation}\label{multieq2}
    \#(\TT_3)_{Y_3}(Q)\lesssim (\delta/\rho)^{-\sigma_4-\epsilon^3/4} \rho^{-\sigma_4-\epsilon^3/4} \lesssim \delta^{-\sigma_4-\epsilon^3/4}.
\end{equation}
Now combining this with the left hand side of \eqref{A5}, we get that  
\begin{equation}
    \#(\TT_3)_{Y_3}(Q) \lesssim \delta^{-6\eta-\epsilon^3/4} \mu_{Y_3}. 
\end{equation}
The above inequality says that the multiplicity at any point at scale $\delta$ is bounded by the average multiplicity multiplied by a negative subpolynomial factor. Let $S_1$ be the set of cubes $Q\in \mathcal{Q}(Y_3)$ such that $\#(\TT_3)_{Y_3}(Q)\geq \mu_{Y_3}/4$.

\underline{Claim}: We have $\# S_1 \gtrsim \delta^{6\eta+\epsilon^3/4}\# \mathcal{Q}(Y_3)$. 

\underline{Proof of claim}: We just proved that $\#(\TT_3)_{Y_3}(Q) \lesssim \delta^{-6\eta-\epsilon^3/4} \mu_{Y_3}$. By definition, 
$$
\sum_{Q\in \mathcal{Q}(Y_3)} \#(\TT_3)_{Y_3}(Q)=\mu_{Y_3}\# \mathcal{Q}(Y_3).
$$
Now clearly $\sum_{Q\notin S_1} \#(\TT_3)_{Y_3}(Q)\leq (\mu_{Y_3}\# \mathcal{Q}(Y_3))/4$. Hence we get
$$
\#S_1 \delta^{-6\eta-\epsilon^3/4} \mu_{Y_3}\gtrsim \sum_{Q\in S_1} \#(\TT_3)_{Y_3}(Q)\geq (\mu_{Y_3}\mathcal{Q}(Y_3))/2.
$$
Rearranging the above inequality we get that $\#S_1\gtrsim  \delta^{6\eta+\epsilon^3/4}\# \mathcal{Q}(Y_3)$. Thus our claim is established.

Now we want to show that this set is spread widely across the $\rho$-cubes. For this, we first try to get an upper bound on the number of cubes from $S_1$ that lie inside a single $\rho$-cube. Before moving forward, it is useful to keep in mind that the average multiplicities at scale $\delta$ and $\rho$ are roughly $\delta^{-\sigma_4}$ and $\rho^{-\sigma_4}$ as reflected in \eqref{A5} and \eqref{B5}. Similarly, the total number of $\delta$ and $\rho$-cubes are roughly $\delta^{\sigma_4-4}$ and $\rho^{\sigma_4-4}$ respectively which follows from \eqref{A4} and \eqref{B4} respectively. These factors will keep appearing in the following calculations. Using the Properties (iv) and (vi) of the Proposition \ref{multiscaleprop}, we have that in any $\rho$-cube $\Tilde{Q}\in \mathcal{Q}(\Tilde{Y_3})$
\begin{equation}\label{A11}
    \sum_{T\in \TT_3} |Y_3(T)\cap \Tilde{Q}|\lesssim w\rho^{-\sigma_4-\epsilon^3/4}.
\end{equation}
Here $w$ is the constant we get from (vi) of Proposition \ref{multiscaleprop}. More specifically, we have that $\sum_{T\in \TT_3[\Tilde{T}]} |Y_3(T)\cap \Tilde{Q}|\sim w$ for each $\Tilde{T}\in \Tilde{\TT}_3$ and $\Tilde{Q}\in \Tilde{Y}_3(\Tilde{T})$. Now as discussed in Remark \ref{canonrmk}, we see that this can be interpreted as saying the ``$\delta$-mass" inside every incidence pair at the larger scale $\rho$ is constant and that constant is $w$. Therefore, to compute $w$, we just need to divide the total $\delta$-mass which is $\sum_{T\in \TT_3}|Y_3(T)|$ by the number of incidence pairs $(\Tilde{Q},\Tilde{T})$ at the larger scale $\rho$. Using \eqref{refineineqlem2}, we can see that the number of such pairs equals $\rho^{-4}\sum_{\Tilde{T}\in \Tilde{\TT}_3}|\Tilde{Y}_3(\Tilde{T})|$. Therefore we get that
\begin{equation}\label{A12}
    w\sim \rho^4 \frac{\sum_{T\in \TT_3}|Y_3(T)|}{\sum_{\Tilde{T}\in \Tilde{\TT}_3}|\Tilde{Y}_3(\Tilde{T})|}.
\end{equation}
Combining \eqref{A11} and \eqref{A12}, we get that
\begin{equation}
    \sum_{T\in \TT_3} |Y_3(T)\cap \Tilde{Q}|\lesssim \rho^4 \frac{\sum_{T\in \TT_3}|Y_3(T)|}{\sum_{\Tilde{T}\in \Tilde{\TT}_3}|\Tilde{Y}_3(\Tilde{T})|}\rho^{-\sigma_4-\epsilon^3/4}\lesssim \rho^4 \delta^{-3\eta}\rho^{-\epsilon^3/4}\rho^{-\sigma_4-\epsilon^3/4}.
\end{equation}
At the last step we used \eqref{A2} and \eqref{B2} to bound the total masses. Note that using the same logic as in \eqref{refineineqlem2}, we get that the left hand side equals $\delta^4\sum_{Q\subset \Tilde{Q}} \# (\TT_3)_{Y_3}(Q)$. Therefore, ultimately, we get that for any $\rho$-cube $\Tilde{Q}\in \mathcal{Q}(\Tilde{Y_3})$, we have
\begin{equation}\label{eq10}
    \sum_{Q\subset \Tilde{Q}} \# (\TT_3)_{Y_3}(Q) \lesssim \delta^{-4-3\eta} \rho^{4-\sigma_4-\epsilon^3/2}
\end{equation}

This inequality can now be leveraged to show that not too many cubes from $S_1$ can be packed into a single $\rho$-cube. Using the definition of $S_1$ and \eqref{eq10}, we get that the number of $\delta$-cubes of $S_1$ inside a $\rho$-cube $\Tilde{Q}$ is $\lesssim \delta^{-4-3\eta} \rho^{4-\sigma_4-\epsilon^3/2}\mu_{Y_3}^{-1}$. Let $\Tilde{S}_1$ be the set of $\rho$-cubes $\Tilde{Q}\in \mathcal{Q}(\Tilde{Y_3})$ such that it has a $\delta$-cube from $S_1$ inside it. Therefore, the $\delta$-cubes of $S_1$ are distributed among the $\rho$-cubes of $\Tilde{S}_1$ without too many allowed to concentrate into a single $\rho$-cube. Furthermore, since $\# S_1 \gtrsim \delta^{6\eta+\epsilon^3/4}\# \mathcal{Q}(Y_3)$, we get that
\begin{equation}
    \# \Tilde{S}_1 \gtrsim \frac{\delta^{6\eta+\epsilon^3/4}\# \mathcal{Q}(Y_3)}{\delta^{-4-3\eta} \rho^{4-\sigma_4-\epsilon^3/2}\mu_{Y_3}^{-1}} \gtrsim \frac{\delta^{6\eta+\epsilon^3/4} \delta^{\sigma_4-4+\epsilon^3/4} }{\delta^{-4-3\eta} \rho^{4-\sigma_4-\epsilon^3/2}\delta^{\sigma_4-6\eta}}\gtrsim \rho^{\sigma_4-4+\epsilon^3/2}\delta^{15\eta+\epsilon^3/2}.
\end{equation}
In the above equation, we used \eqref{A4} and \eqref{A5} to bound $ \mathcal{Q}(Y_3)$ and $\mu_{Y_3}$ respectively. Also note that $\#\mathcal{Q}(\Tilde{Y}_3)\lesssim \rho^{\sigma_4-4-\epsilon^3/4}$ due to \eqref{B4}. Therefore, we get
\begin{equation}\label{main1}
    \# \Tilde{S}_1 \gtrsim \#\mathcal{Q}(\Tilde{Y}_3) \rho^{\epsilon^3} \delta^{15\eta+\epsilon^3/2}.
\end{equation}
The above inequality tells us that $\Tilde{S}_1$ consists of most of the $\rho$-cubes up to some subpolynomial losses. The next step as we mentioned earlier is to show that each of these $\rho$-cubes have a lot of $\rho$-tubes making angle $\rho$ with a 2-plane. Recall that for every $\delta$-cube $Q\in \mathcal{Q}(Y_3)$, there exists a 2-plane $H_Q$ such that all the $\delta$-tubes in $(\TT_3)_{Y_3}(Q)$ make angle less than $\rho$ with it. Therefore, let $\Tilde{Q}\in \Tilde{S}_1 $ with $\delta$ cube $Q\in S_1$ contained inside $\Tilde{Q}$. Then, using the upper bound for multiplicity of $\delta$-tubes within a $\rho$-tube from Property (iv) of Proposition \ref{multiscaleprop}, we get that the number of $\rho$-tubes $\Tilde{T}\in (\Tilde{\TT}_3)_{\Tilde{Y}_3}(\Tilde{Q})$ such that $Q\subset E_{\TT_3[\Tilde{T}]}$ is
\begin{equation}\label{main2}
    \gtrsim \frac{\mu_{Y_3}}{(\delta/\rho)^{-\sigma_4-\epsilon^3/4}}\gtrsim \frac{\delta^{-\sigma_4+6\eta}}{(\delta/\rho)^{-\sigma_4-\epsilon^3/4}}=\delta^{6\eta+\epsilon^3/4}\rho^{-\sigma_4-\epsilon^3/4}\gtrsim \delta^{6\eta+\epsilon^3/4} \mu_{\Tilde{Y}_3}.
\end{equation}
Note that we also used inequalities \eqref{A5} and \eqref{B5} to get the necessary bounds on $\mu_{Y_3}$ and $\mu_{\Tilde{Y}_3}$ respectively. All the $\rho$-tubes in the above set make angle $\lesssim \rho$ with the 2-plane $H_Q$ corresponding to $Q$. Therefore, keeping only such $\rho$-tubes among the already selected $\rho$-cubes, we obtain a $t$-refinement $(\Tilde{\TT}_4,\Tilde{Y}_4)_\rho$ of $(\Tilde{\TT}_3,\Tilde{Y}_3)_\rho$ which is plany (value of $t$ to be determined next). Furthermore, at the smaller scale, we refine $(\TT_3,Y_3)_\delta$ to $(\TT_4,Y_4)_\delta$ canonically as in Definition \ref{canondef} to maintain the covering property. 

We have now secured Lemma \ref{mainlem}, Item (b.vi), but because of the refinements we have done, we need to investigate and fix whatever damage we have done to the other properties namely (i), (ii), (iii), (iv) and (v). The good news is that (vi) is stable under further refinements. The same holds for (i) obviously. Also (iv) and (v) are upper bounds on multiplicities so they too are stable under refinements. Therefore, we mainly need to worry about (ii) and (iii) now. Let us first ensure we have not lost too much mass in our last refinement. Combining \eqref{main1} and \eqref{main2}, we get that
\begin{align}\label{eq14}
\begin{split}
    \sum_{\Tilde{T}\in \Tilde{\TT}_4} |\Tilde{Y}_4(\Tilde{T})|&=\rho^4\sum_{\Tilde{Q}\in \Tilde{S}_1}\# (\Tilde{\TT}_4)_{\Tilde{Y}_4}(\Tilde{Q})\\
    &\gtrsim \rho^4 \delta^{6\eta+\epsilon^3/4} \mu_{\Tilde{Y}_3} \#\mathcal{Q}(\Tilde{Y}_3) \rho^{\epsilon^3} \delta^{15\eta+\epsilon^3/2}\\
    &\gtrsim \rho^{\epsilon^3}\delta^{21\eta+\epsilon^3}\sum_{\Tilde{T}\in \Tilde{\TT}_3} |\Tilde{Y}_3(\Tilde{T})|.
\end{split}
\end{align}
Note that in the above, we freely used \eqref{refineineqlem2} to switch between sum over multiplicities and masses of shadings. Now because of Proposition \ref{multiscaleprop} Items (i) and (vi)  (not to be confused
with Lemma \ref{mainlem}, Items (b.vi)), we have the same refinement factor at both scales $\rho$ and $\delta$. This was shown in Remark \ref{canonrmk}. Hence we get
\begin{equation}\label{eq15}
    \sum_{T\in \TT_4} |Y_4(T)|\gtrsim \rho^{\epsilon^3}\delta^{21\eta+\epsilon^3}\sum_{T\in \TT_3}|Y_3(T)|.
\end{equation}

Now some of the fat tubes could have low mass inside of them violating (ii), so we fix that now. Dyadically pigeonhole to get constant density at the scale $\rho$ by throwing away some fat tubes. Call the obtained collection $(\Tilde{\TT}_5,\Tilde{Y}_5)_\rho$ and again refine $(\TT_4,Y_4)_\delta$ to $(\TT_5,Y_5)_\delta$ canonically to maintain the covering property. Since we have the same refinement factor at both scales (again from (vi) of Proposition \ref{multiscaleprop}), the collections $(\TT_5,Y_5)_\delta$ and $(\Tilde{\TT}_5,\Tilde{Y}_5)_\rho$ are refinements of $(\TT_4,Y_4)_\delta$ and $(\Tilde{\TT}_4,\Tilde{Y}_4)_\rho$ respectively. Therefore 
\begin{equation*}
    \sum_{T\in \TT_5[\Tilde{T}]} |Y_5(T)|\sim \frac{\sum_{T\in \TT_5}|Y_5(T)|}{\# \Tilde{\TT}_5}\gtrsim \frac{\rho^{\epsilon^3}\delta^{22\eta+\epsilon^3} \sum_{T\in \TT}|Y(T)|}{\# \Tilde{\TT}_3}\gtrsim \frac{\rho^{\epsilon^3}\delta^{23\eta+\epsilon^3}}{\rho^{1-n-\epsilon^3/4}}.
\end{equation*}
Then after unit rescaling, we get
$$
\sum_{\hat{T}\in \hat{\TT}} |\hat{Y}(\hat{T})|\gtrsim \rho^{2\epsilon^3}\delta^{23\eta+\epsilon^3}\gtrsim \delta^{23\eta+3\epsilon^3}\gtrsim \bigg(\frac{\delta}{\rho}\bigg)^{\frac{23\eta+3\epsilon^3}{\epsilon}}.
$$
The last inequality follows from the fact that $\rho\geq \delta^{1-\epsilon}$. Hence by choosing $\eta$ sufficiently small we can obtain $\sum_{\hat{T}\in \hat{\TT}}|\hat{Y}(\hat{T})|\geq (\delta/\rho)^\epsilon$ which establishes (iii). Now we just need to make sure (ii) holds. This is not too hard to ensure as we have been keeping track of how much mass we have been losing at scale $\rho$ and its just a few logarithmic or subpolynomial factors. Therefore, using \eqref{eq14}, we get that
$$
\sum_{\Tilde{T}\in \Tilde{\TT}_5}|\Tilde{Y}_5(\Tilde{T})| \gtrsim \rho^{2\epsilon^3}\delta^{21\eta+\epsilon^3} \gtrsim \rho^{2\epsilon^3} \rho^{\frac{21\eta+\epsilon^3}{\epsilon}}.
$$
Here we used that $\rho\leq \delta^\epsilon$ in the last inequality. By choosing $\eta$ sufficiently small, we can ensure that $\sum_{\Tilde{T}\in \Tilde{\TT}_5}|\Tilde{Y}_5(\Tilde{T})|\geq \rho^\epsilon$ which establishes (ii). Therefore $(\TT_5,Y_5)_\delta$ and $(\Tilde{\TT}_5,\Tilde{Y}_5)_\rho$ satisfy Properties (i) to (vi) of the Lemma. Now we just need to compute the refinement factor of $(\TT_5,Y_5)_\delta$. Using \eqref{eq15} and \eqref{B2}, we have
$$
\sum_{T\in \TT_5} |Y_5(T)|\gtrsim \rho^{\epsilon^3}\delta^{23\eta+\epsilon^3}\sum_{T\in \TT}|Y(T)|\gtrsim \delta^{2\epsilon^3+23\eta}\sum_{T\in \TT}|Y(T)|.
$$
Therefore, we get that $(\TT_5,Y_5)_\delta$ is a $\delta^{23\eta+2\epsilon^3}$-refinement  of $(\TT,Y)_\delta$. At last all the required properties are now established and the proof is complete.

\end{proof}

\begin{remark}\label{shrink}
    We can replace $\eta$ with anything smaller and $\epsilon$ with anything bigger and the result will still hold. This flexibility will be useful for us later on in the proof of Theorem \ref{goal} in Section \ref{pfsec}.
\end{remark}

\section{Previous Kakeya-type estimates in $\RR^4$}

\subsection{Technical preliminaries}

Before we state the pre-existing Kakeya-type estimates in $\RR^4$, we need to discuss some technicalities. First, let us make a couple of definitions. The definitions in this section are from Section 2 of \cite{katzzahl}.

\begin{defi} (Two-ends condition).
    Let $(\TT,Y)_\delta$ be a set of $\delta$-tubes and their associated shading. We say that $(\TT,Y)_\delta$ is $(\epsilon_1,C_1)$-\textit{two ends} if for all $T\in \TT$ and all $\delta\leq r\leq 1$, we have
    \begin{equation}
        \#\{Q:Q\subset Y(T)\cap B(x,r)\}\leq r^{\epsilon_1}C_1 \lambda_Y\delta^{-1}
    \end{equation}
    for all balls $B(x,r)$.
\end{defi}

Intuitively, this condition says that the shading is not concentrated in any $r$-ball but spread across both ends of the tube. The two-ends condition is valuable because collections of tubes which satisfy this condition tend to be easier to manipulate. Next, we make another definition.

\begin{defi} (Robust transversality).
    Let $(\TT,Y)_\delta$ be a set of $\delta$-tubes and their associated shading. We say that $(\TT,Y)_\delta$ is $(\epsilon_2,C_2)$-\textit{robustly transverse} if for all $\delta$-cubes $Q$, all vectors $v$, and all for all $\delta<r<1$, we have 
    \begin{equation}
        \#\{T\in \TT_Y(Q): \angle (v(T),v)\leq r\}\leq r^{\epsilon_2}C_2 \mu_Y.
    \end{equation}
\end{defi}

This property blocks too many tubes passing through a point from concentrating around a particular direction. In other words, the tubes passing through a point have transverse directions. Both of these conditions are used frequently in the research literature. The reason we made these definitions here is because both of these conditions occur as hypothesis in the planebrush bound (Theorem \ref{planebrush}) of Katz-Zahl \cite{katzzahl}.

We would like to apply this planebrush result to extremal collections of tubes. Hence, to do this, we need to show that extremal collections satisfy both two-ends and robust-transversality conditions. Property (c) of Definition \ref{31} will help us obtain the two-ends without much difficulty. However, obtaining the robust-transversality condition is a bit more challenging. We will need to use Proposition \ref{multiscaleprop} for this. Wang and Zahl \cite{wangzahl} used this very proposition to show robust-transversality at angle $\delta^\epsilon$ for extremal collections at scale $\delta$ (Lemma 3.4 in \cite{wangzahl}). We adapt their approach to obtain robust-transversality for all angles $\delta\leq r\leq 1$. 

\begin{lem}\label{tertlem}
    For all $\epsilon>0$, there exists $\eta>0$ and $\delta_0>0$ with the following property. Suppose that $(\TT,Y)_\delta$ is an $\eta$-extremal collection of tubes with $\delta<\delta_0$. Then there is a $\delta^\eta$-refinement $(\TT',Y')_\delta$ of $(\TT,Y)_\delta$ which is $(1, \delta^{-4\eta})$-two ends and $(\sigma_4, \delta^{-3\eta-3\epsilon})$-robustly transverse.
\end{lem}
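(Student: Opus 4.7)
The plan is to apply Proposition \ref{multiscaleprop} at a bounded collection of intermediate scales between $\delta$ and $1$, and use each resulting cover to certify robust transversality at the corresponding angular scale $r$. The two-ends condition will drop out essentially for free from a preliminary constant-density refinement.

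First I would apply Lemma \ref{constaref}(a) to obtain a refinement on which $|Y(T)|/|T|\sim \lambda_Y$ for every $T$. Since $(\TT,Y)_\delta$ is $\eta$-extremal, \eqref{densityext} gives $\lambda_Y \gtrsim \delta^{2\eta}$. The trivial count $\#\{Q\subset Y(T)\cap B(x,r)\}\le r/\delta$ then immediately certifies the $(1,\delta^{-4\eta})$-two-ends condition, because $r/\delta \le r\cdot \delta^{-4\eta}\cdot \lambda_Y\cdot \delta^{-1}$ once $\delta$ is small enough for the implicit constants to be absorbed by a $\delta^{-2\eta}$ slack. Two-ends is stable under any further subset refinement of the shading, so from here the task reduces to robust transversality.

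For robust transversality I will adapt the Wang--Zahl argument (Lemma~3.4 of \cite{wangzahl}) and run it at many scales in parallel. Fix $N=\lceil 10/\epsilon\rceil$ and set $\rho_k=\delta^{k/N}$ for $k=1,\ldots,N-1$, so that every $r\in[\delta,1]$ satisfies $r\sim \rho_k$ up to a factor $\delta^{1/N}$ for some $k$, and every $\rho_k$ lies in $[\delta^{1-\epsilon},\delta^\epsilon]$. For each $k$, I would apply Proposition \ref{multiscaleprop} with $\rho=\rho_k$ and inner parameter $\epsilon'=\epsilon/(2N)$, choosing $\eta$ small enough that all $N-1$ applications are legal simultaneously. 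Each application loses only a polylogarithmic factor in mass, so the common refinement over all $k$ (a number independent of $\delta$) is still a $\delta^\eta$-refinement for $\eta$ sufficiently small. Call this common refinement $(\TT',Y')_\delta$; by construction it is compatible with a cover $(\widetilde{\TT}^{(k)},\widetilde{Y}^{(k)})_{\rho_k}$ satisfying the conclusions of Proposition \ref{multiscaleprop} for every $k$.

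To bound a single cap count, fix $Q\in\mathcal{Q}(Y')$, a direction $v$, and $r\in[\delta,1]$, and pick $k$ with $r\sim \rho_k$. Every $T\in\TT'_{Y'}(Q)$ with $\angle(v(T),v)\le r$ is covered by a $\rho_k$-tube in $\widetilde{\TT}^{(k)}$ passing through the $\rho_k$-cube containing $Q$ and whose direction lies in a cap of angle $\lesssim r$. Because the $\rho_k$-tubes have essentially $\rho_k$-separated directions (Definition \ref{31}(b)), there are at most $O((r/\rho_k)^3\delta^{-\eta})=O(\delta^{-\eta-O(1/N)})$ such $\rho_k$-tubes, and by Proposition \ref{multiscaleprop}(v) each contributes $\lesssim(\delta/\rho_k)^{-\sigma_4-\epsilon'}$ many $\delta$-tubes at $Q$. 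Multiplying and comparing the product with $r^{\sigma_4}\mu_{Y'}$, using the lower bound $\mu_{Y'}\gtrsim \delta^{-\sigma_4+O(\eta)}$ from \eqref{A5}, shows that the total is $\le r^{\sigma_4}\delta^{-3\eta-3\epsilon}\mu_{Y'}$, provided $\eta$ is chosen small enough relative to $\epsilon$. The main obstacle is bookkeeping these subpolynomial losses: the intersection of $N$ successive refinements must still retain $\delta^\eta$-mass, and the cumulative slack coming from the direction-separation factor, the multiplicity constant $\epsilon'$, and the $r\sim\rho_k$ approximation must together fit inside $\delta^{-3\eta-3\epsilon}$. Both are achieved by taking $\eta$ very small in terms of $\epsilon$ (and hence $N$), which is exactly the flexibility that Proposition \ref{multiscaleprop} provides.
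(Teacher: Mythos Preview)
Your overall strategy matches the paper's: apply Proposition~\ref{multiscaleprop} at a grid of $O(1/\epsilon)$ intermediate scales $\rho_k=\delta^{k/N}$, extract the upper bound in item~(v) at each scale to control cap counts, and interpolate to general $r$; two-ends is free from $\lambda_{Y'}\gtrsim\delta^{O(\eta)}$ (your preliminary constant-density refinement is harmless but unnecessary---the paper simply reads two-ends off at the end from \eqref{densityext}).

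The one genuine gap is the ``common refinement over all $k$.'' As written, you apply Proposition~\ref{multiscaleprop} in parallel to the same starting collection at each $\rho_k$, obtaining refinements $(\TT'_k,Y'_k)_\delta$, and then intersect them. This does not preserve mass: each $(\TT'_k,Y'_k)$ retains only a $|\log\delta|^{-C}$ fraction of the original mass, not a $1-|\log\delta|^{-C}$ fraction, so the intersection of $N-1$ such refinements can in principle be empty. The paper fixes this by applying Proposition~\ref{multiscaleprop} \emph{sequentially}: after obtaining $(\TT_1,Y_1)$ at scale $\rho_1$, one applies the proposition again to $(\TT_1,Y_1)$ (which is $2\eta$-extremal by Remark~\ref{smolrmk}) at scale $\rho_2$, and so on, producing a nested chain $(\TT,Y)\supset(\TT_1,Y_1)\supset\cdots\supset(\TT',Y')$ in which the polylogarithmic losses are genuinely multiplicative. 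The cap-count bound obtained at each stage $k$ survives to the end because it is a pure upper bound on $\#(\TT_k)_{Y_k}(Q)$, which only decreases under further refinement; note, however, that the earlier covers will no longer satisfy \emph{all} conclusions of Proposition~\ref{multiscaleprop} relative to the final $(\TT',Y')$---only the upper-bound items (iv) and (v) persist, which is all the argument needs. One minor slip: with $N=\lceil 10/\epsilon\rceil$ you have $\rho_1=\delta^{1/N}>\delta^\epsilon$, so the claim that every $\rho_k$ lies in $[\delta^{1-\epsilon},\delta^\epsilon]$ is false; you presumably meant the wider interval $[\delta^{1-\epsilon'},\delta^{\epsilon'}]$ dictated by your inner parameter, and the paper avoids this by taking $N=1/\epsilon$ so the endpoints line up exactly.
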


\begin{proof}
Let $N=1/\epsilon$. Choose $\eta$ small enough so that $N\eta$ works for $\epsilon$ in Proposition \ref{multiscaleprop}. Let $\rho_1=\delta^\epsilon=\delta^{1/N}$. Apply Proposition \ref{multiscaleprop} with this as value of $\rho$ to get a refinement $(\TT_1,Y_1)_\delta$ and a cover $(\Tilde{\TT},\Tilde{Y})_{\rho_1}$. Since the resulting collection $(\Tilde{\TT},\Tilde{Y})_{\rho_1}$ of tubes are essentially distinct, the collection $\{T\in (\TT_1)_{Y_1}(Q):\angle(v(T),v)\leq \rho_1\}$ must be covered by $O(1)$ $\rho_1$-tubes from $\Tilde{\TT}$. Now using Item (v) of Proposition \ref{multiscaleprop} we get 
\begin{equation*}
    \# \{T\in (\TT_1)_{Y_1}(Q):\angle(v(T),v)\leq \rho_1\}\lesssim (\delta/\rho_1)^{-\sigma_4-\epsilon}.
\end{equation*}
The refined collection $(\TT_1,Y_1)_\delta$ is $2\eta$-extremal, so we can apply Proposition \ref{multiscaleprop} again to get a refinement $(\TT_2,Y_2)_\delta$ with $\rho_2=\delta^{2/N}$. Again, we get
$$
\# \{T\in (\TT_2)_{Y_2}(Q):\angle(v(T),v)\leq \rho_2\}\lesssim (\delta/\rho_2)^{-\sigma_4-\epsilon}.
$$
We keep doing this until we finally get a refinement $(\TT_{N-1},Y_{N-1})_\delta$ with $\rho_{N-1}=\delta^{\frac{N-1}{N}}=\delta^{1-\epsilon}$. Just like before, we eventually obtain  
$$
\# \{T\in {(\TT_{N-1}})_{Y_{N-1}}(Q):\angle(v(T),v)\leq \rho_{N-1}\}\lesssim (\delta/\rho_{N-1})^{-\sigma_4-\epsilon}.
$$
Let us call this final collection $(\TT',Y')_\delta$. Then as upper bounds are preserved under refinements we get
$$
\# \{T\in \TT'_{Y'}(Q):\angle(v(T),v)\leq \delta^{j/N}\}\lesssim \bigg(\frac{\delta}{\delta^{\frac{j}{N}}}\bigg)^{-\sigma_4-\epsilon}\quad \quad \text{for }j=1,\ldots,N-1.
$$
We have thus obtained an upper bound for a sequence of angles. Next, suppose that $\delta^{\frac{j+1}{N}}<\rho\leq\delta^{\frac{j}{N}}$ for $j=1,\ldots, N-1$. Then
\begin{align*}
    \# \{ T \in \TT'_{Y'}(Q): \angle (v(T),v)\leq \rho \} &\lesssim \bigg( \frac{\delta}{\delta^{\frac{j}{N}}}\bigg)^{-\sigma_4-\epsilon}\\
    &\lesssim \bigg( \frac{\delta}{\rho\delta^{\frac{-1}{N}}}\bigg)^{-\sigma_4-\epsilon}\\
    &\lesssim \bigg( \frac{\delta}{\rho}\bigg)^{-\sigma_4-\epsilon} (\delta^\epsilon)^{-\sigma_4-\epsilon}\\
    &\lesssim \bigg( \frac{\delta}{\rho}\bigg)^{-\sigma_4-\epsilon} \delta^{-2\epsilon}.
\end{align*}
Therefore, we have now obtained an upper bound for angles $\rho\in (\delta,\delta^{1/N}]$. We need to consider $\delta^{\frac{1}{N}}<\rho<1$ separately. In this case combining (iii) and (iv) of Proposition \ref{multiscaleprop}, we get
$$
\# \{ T\in \TT'_{Y'}(Q):\angle (v(T),v)\leq \rho \}\lesssim \delta^{-\sigma_4-\epsilon}\lesssim \bigg( \frac{\delta}{\rho}\bigg)^{-\sigma_4-\epsilon} \delta^{-2\epsilon}.
$$
We used similar manipulations here as in the previous computation. Therefore, we finally have
\begin{equation}\label{18}
    \# \{ T\in \TT'_{Y'}(Q):\angle (v(T),v)\leq \rho \}\lesssim \bigg( \frac{\delta}{\rho}\bigg)^{-\sigma_4-\epsilon} \delta^{-2\epsilon}\quad \text{for all } \rho\in(\delta,1).
\end{equation}
Because we did $N=1/\epsilon$ refinements, we get 
$$
\sum_{T\in \TT'}|Y'(T)|\gtrsim \bigg(\log \frac{1}{\delta}\bigg)^{-C/\epsilon}\sum_{T\in \TT}|Y(T)|.
$$
By choosing $\delta_0>0$ sufficiently small we can guarantee that
$$
\bigg(\log \frac{1}{\delta}\bigg)^{-C/\epsilon}\gtrsim \delta^\eta \quad \text{for all }\delta\in (0,\delta_0].
$$
Therefore, $(\TT',Y')_\delta$ is a $\delta^\eta$ refinement. Further, we have the following lower bound on its average multiplicity
\begin{equation}\label{19}
    \mu_{Y'}=\frac{\sum_{T\in \TT'}|Y'(T)|}{\big|\bigcup_{T\in \TT'}Y'(T)\big|}\gtrsim \frac{\delta^{2\eta}}{\delta^{\sigma_4-\eta}}=\delta^{-\sigma_4+3\eta}.
\end{equation}
Combining \eqref{18} and \eqref{19}, we get that
$$
\# \{ T\in \TT'_{Y'}(Q):\angle (v(T),v)\leq \rho \}\lesssim \rho^{\sigma_4} \delta^{-3\epsilon-3\eta}\mu_{Y'}.
$$
Hence we have established that $(\TT',Y')_\delta$ is $(\sigma_4, \delta^{-3\eta-3\epsilon})$-robustly transverse. Further note that $(\TT',Y')_\delta$ is $2\eta$-extremal, therefore $\lambda_{Y'}\gtrsim \delta^{4\eta}$ by \eqref{densityext}. This finally gives us that
$$
\# \{Q:Q\subset Y'(T)\cap B(x,r)\}\lesssim r\delta^{-1}\lesssim r \delta^{-4\eta} \lambda_{Y'} \delta^{-1}.
$$
Hence we get that $(\TT',Y')_\delta$ is $(1,\delta^{-4\eta})$-two ends. This completes the proof.

\end{proof}

\subsection{Guth-Zahl's Trilinear Kakeya estimate}

Trilinear Kakeya gives a dimension bound of $3.25$ for collections of tubes in which most triples of tubes passing through a common point have direction vectors pointing in (quantitatively) linearly independent directions (see Figure \ref{trifig}). An estimate of this kind was first proved by Guth and Zahl in \cite{guthzahl}. The following is the version which is stated in \cite{katzzahl} as Corollary 3.6.
\begin{theo}\label{trikak}
    For each $\epsilon>0$, there exists a constant $c_\epsilon$ so that the following holds. Let $(\TT,Y)_\delta$ be a set of $\delta$-tubes in $\RR^4$ that point in $\delta$-separated directions and their associated shading. Suppose that for each $Q\in \mathcal{Q}(Y)$, we have
    $$
    \#\{(T_1,T_2,T_3)\in \TT_Y(Q)^3: |v_1\wedge v_2 \wedge v_3|\geq \theta\}\geq s (\#\TT_Y(Q))^3.
    $$
    Then
    \begin{equation}
        \bigg| \bigcup_{T\in \TT} Y(T)\bigg|\geq c_\epsilon \delta^\epsilon s^{9/4} \lambda_Y^{13/4}\delta^{3/4}\theta (\delta^3\#\TT)^{1/4}.
    \end{equation}
\end{theo}

\subsection{Katz-Zahl's Planebrush estimate}

As we have mentioned earlier, the planebrush is a geometrical object which is a higher dimensional analogue of Wolff's hairbrush \cite{wolff1}. Both of these objects are represented pictorially below (Figures \ref{hbfig} and \ref{pbfig}).

\begin{figure}[h]
    \centering
    \begin{minipage}{0.45\textwidth}
        \centering
        \includegraphics[scale=0.4]{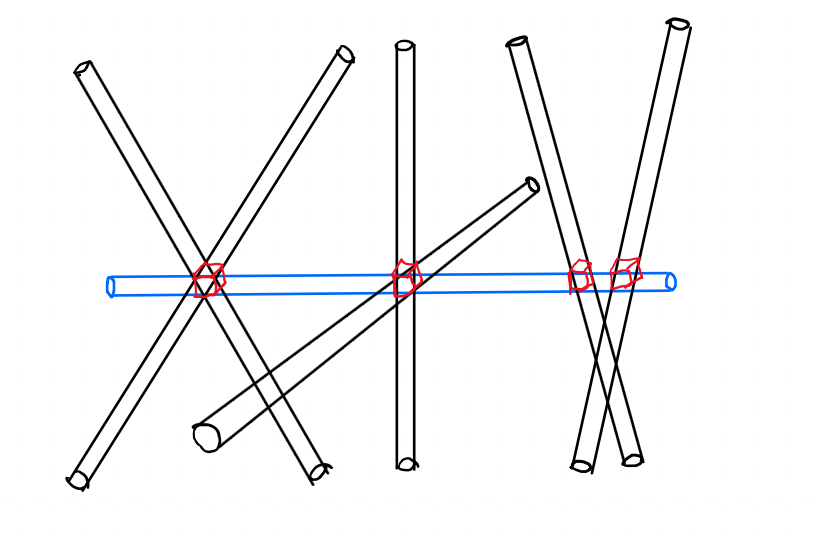} % first figure itself
        \caption{Hairbrush configuration.}
        \label{hbfig}
    \end{minipage}\hfill
    \begin{minipage}{0.45\textwidth}
        \centering
        \includegraphics[scale=0.4]{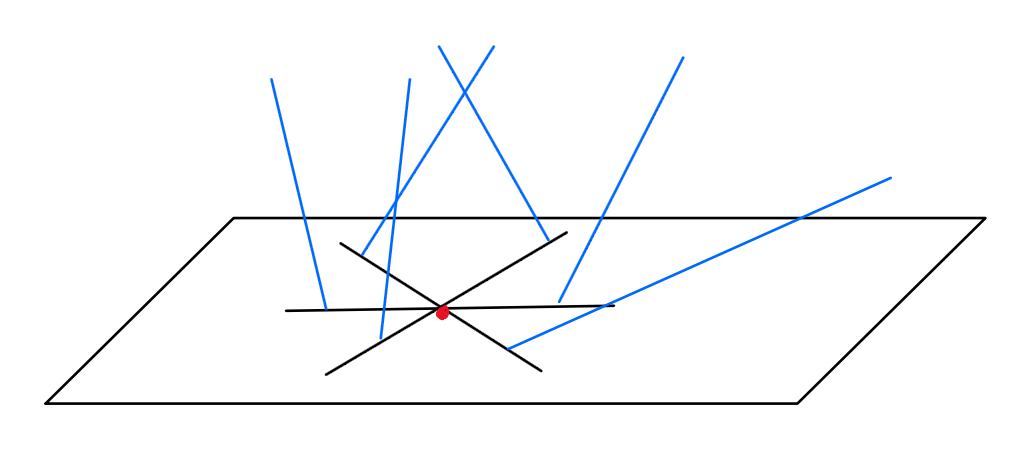} % second figure itself
        \caption{Planebrush configuration. We use lines instead of tubes for simplicity.}
        \label{pbfig}
    \end{minipage}
\end{figure} 

The planebrush argument gives a bound of $10/3$ for plany collections of tubes in $\RR^4$ as we shall state precisely now. Note that this is due to Katz and Zahl \cite{katzzahl}, and occurs as Proposition 5.3 in their paper.
\begin{theo}\label{planebrush}
    Let $0<\rho<1$. Let $0<\epsilon_2<\epsilon_1<1$. Let $\Omega$ be a set of $\rho$-separated directions. Let $(\Tilde{\TT},\Tilde{Y})_\rho$ be a set of essentially distinct plany $\rho$-tubes and their associated shading. Suppose that
    \begin{itemize}
        \item There are $\sim (\#\Tilde{\TT})/(\#\Omega)$ tubes from $\Tilde{\TT}$ pointing in each direction $v\in \Omega$.
        \item $(\Tilde{\TT},\Tilde{Y})_\rho$ is $(\epsilon_1,C_1)$-two ends.
        \item $(\Tilde{\TT},\Tilde{Y})_\rho$ is $(\epsilon_2,C_2)$-robustly transverse. 
    \end{itemize}
    Then for each $\epsilon>0$, there is a constant $c>0$ (depending on $\epsilon, \epsilon_1,\epsilon_2)$ so that 
    \begin{equation}
        \bigg| \bigcup_{\Tilde{T}\in \Tilde{\TT}} \Tilde{Y}(\Tilde{T})\bigg| \geq cC_1^{-1/\epsilon_1} C_2^{-2/\epsilon_2}\rho^\epsilon \lambda_{\Tilde{Y}}^{4/3} \rho^{2/3} (\rho^3 \# \Omega)^{1/3} (\rho^3\#\Tilde{\TT})^{2/3}.
    \end{equation}
\end{theo}

We now have all the ingredients we need to prove Theorem \ref{goal}, so we do this in the following section.

\section{Proof of Theorem \ref{goal}}\label{pfsec}

For the reader's convenience, we recall Theorem \ref{goal} here.

\begin{flushleft}
    \textbf{Theorem \ref{goal}.}  Let $n=4$. Then, for every $\epsilon>0$, there exists a constant $c_\epsilon>0$ and $\eta>0$ such that $V(\eta, \delta)\geq c_\epsilon \delta^{\frac{3}{4}+\epsilon}$. That is to say whenever $(\TT,Y)_\delta$ is an $\eta$-extremal collection of tubes in $\RR^4$, we have
    \begin{equation}\label{mainineq}
        \bigg|\bigcup_{T\in \TT} Y(T)\bigg|\geq c_{\epsilon}\delta^{\frac{3}{4}+\epsilon}.
    \end{equation}

\end{flushleft}

We shall be proving the above statement in this section. As we saw in Section \ref{dmss}, the above theorem implies Theorem \ref{main}. We recall it as well.

\begin{flushleft}
    \textbf{Theorem \ref{main}.}  Sticky Kakeya sets in $\RR^4$ have Hausdorff dimension at least 3.25.
\end{flushleft}

Thus, all that remains to be done is prove Theorem \ref{goal} using all the ingredients from previous sections. The reader may wish to take a moment to revisit the sketch we made in Subsection \ref{pfske} before diving into the following rigorous proof.

\vspace{4mm}

\textit{Proof of Theorem \ref{goal}.} We will use induction on scales to prove the theorem. Let us fix $\epsilon_0>0$, we now need to find $\eta^*>0$ so that 
\begin{equation}\label{maineq}
    V(\eta^*,\delta)\geq c_{\epsilon_0} \delta^{\frac{3}{4}+\epsilon_0}.
\end{equation}
We shall find such an $\eta^*$ after a sequence of steps. First let $\epsilon_1, \epsilon_2$ be small positive constants such that 

\begin{equation}\label{condeps}
    \epsilon_1< \frac{\epsilon_0}{4}\quad \quad \text{and} \quad \quad \epsilon_2< \frac{\epsilon_0}{200}.
\end{equation}
Next, let $N$ be the smallest positive integer $N$ such that $4/N<\epsilon_0$. Because we chose the smallest integer, we have $\epsilon_0<2/N$. Further, let $\eta_0$ be a small positive constant such that
\begin{equation}\label{condeta}
    \eta_0< \frac{\epsilon_0\sigma_4}{10^4}.
\end{equation}
The reasons for these conditions will  be clear later in the proof. Next, we iteratively choose a decreasing sequence of small positive constants $\eta_1,\eta_2,\ldots,\eta_N$ such that if you take any consecutive pair $\eta_j, \eta_{j+1}$, then they work as $\epsilon=\eta_j$ and $\eta=\eta_{j+1}$ for both Lemma \ref{mainlem} and Lemma \ref{tertlem}. In particular, we will have $\eta_0\geq \eta_i$ for any $i$. With these constants now fixed, let us begin the induction.

Formally, our inductive statement for $j\in \{1,2,\ldots, N\}$ is the following: there exists $c_j>0$ (independent of $\delta$) such that $V(\eta_j,\delta^{j/N})\geq c_j \delta^{4/N} (\delta^{j/N})^{3/4}$. If we plugged in $j=1$, we get the statement: there exists $c_1>0$ such that 
\begin{equation}\label{bcase}
    V(\eta_1, \delta^{1/N})\geq c_1 (\delta^{1/N})^{4.75}.
\end{equation}
This statement is not hard to prove since the exponent 4.75 is quite high considering we know that Kakeya sets in $\RR^4$ have dimension at least 3. More precisely, using Property (c) of Definition \ref{31} along with \eqref{numtubes}, we get the following: if $(\TT,Y)_{\delta^{1/N}}$ is an $\eta_1$-extremal collection of tubes, then there exists a tube $T\in \TT$ such that $|Y(T)|\gtrsim (\delta^{1/N})^{3+2\eta_1}$. In particular, this proves \eqref{bcase} and therefore, the base case is done.

Now suppose that there exists $c_j>0$ such that $V(\eta_j, \delta^{j/N})\geq c_j \delta^{4/N}(\delta^{j/N})^{3/4}$. We will show that this implies that there exists $c_{j+1}>0$ such that $V(\eta_{j+1},\delta^{\frac{j+1}{N}})\geq c_{j+1} (\delta^{\frac{j+1}{N}})^{3/4}$. Once we show this, the proof of the inductive step will be complete. After $N$ steps in the induction, we will finally prove the statement for $j=N$, that is $V(\eta_N, \delta)\geq c_N \delta^{3/4+4/N}$. This implies \eqref{maineq} with $\eta^*=\eta_N$ and completes the proof of the theorem. Therefore, all that's left to do now is prove the inductive statement.

First, we define some useful notation. We will denote $\Tilde{\delta}=\delta^{\frac{j+1}{N}}$ and $\rho=\delta^{1/N}$. In terms of this notation, we are given $V(\eta_j, \Tilde{\delta}\rho^{-1})\geq c_j \delta^{4/N} (\Tilde{\delta}\rho^{-1})^{3/4}$ and we need to show that $V(\eta_{j+1}, \Tilde{\delta})\geq c_{j+1} \delta^{4/N} \Tilde{\delta}^{3/4}$. 

Let $(\TT,Y)_{\Tilde{\delta}}$ be an $\eta_{j+1}$-extremal collection of $\Tilde{\delta}$-tubes. We now apply Lemma \ref{mainlem} with $\epsilon=\eta_j$ and $\rho=\delta^{1/N}$. We have $\rho\in [\Tilde{\delta}^{1-\eta_j}, \Tilde{\delta}^{\eta_j}]$ since $j+1\geq 2$. Having applied Lemma \ref{mainlem}, we could be in the case (a) or the case (b). Let us suppose we were in case (a) first, then we can use our trilinear Kakeya bound. Specifically, applying Theorem \ref{trikak}  with $\epsilon_1$ in place of $\epsilon$, we get
\begin{align*}
    \bigg| \bigcup_{T\in \TT} Y(T)\bigg|&\geq c_{\epsilon_1} \Tilde{\delta}^{\epsilon_1} (\Tilde{\delta}^{4\eta_{j+1}})^{13/4} \Tilde{\delta}^{3/4} \rho^2 (\Tilde{\delta}^{\eta_{j+1}})^{1/4}\\
    &\geq c_{\epsilon_1}  \delta ^{13\eta_{j+1}+\frac{\eta_{j+1}}{4}+\epsilon_1} \delta^{2/N}\Tilde{\delta}^{3/4}\\
    &\geq c_{\epsilon_1} (\delta^{14\eta_0}) (\delta^{\epsilon_1})\delta^{2/N}\Tilde{\delta}^{3/4}\\
    &\geq c_{\epsilon_1} (\delta^{1/N}) (\delta^{1/N})\delta^{2/N}\Tilde{\delta}^{3/4}.
\end{align*}

In the last line, we applied our conditions \eqref{condeps} and \eqref{condeta}. Thus, we get the bound $\big| \bigcup_{T\in \TT} Y(T)\big|\geq c_{j+1} \delta^{4/N}\Tilde{\delta}^{3/4}$. This finishes the trilinear case (corresponding to Figure \ref{trifig}). 

Next we need to deal with the more complicated ``weakly" plany case (see Figures \ref{wkplnypic} and \ref{rhoplany}) which corresponds to case (b) in Lemma \ref{mainlem}. In this case, we obtain a plany $\eta_j$-extremal collection $(\Tilde{\TT}, \Tilde{Y})_\rho$ at the larger scale $\rho$. By Lemma \ref{tertlem}, we obtain a $\rho^{\eta_j}$-refinement $(\Tilde{\TT}', \Tilde{Y}')_\rho$ of  $(\Tilde{\TT}, \Tilde{Y})_\rho$ which is $(1, \rho^{-4\eta_j})$-two ends and $(\sigma_4, \rho^{-3\eta_{j-1}-3\eta_j})$-robustly transverse. Now we can apply the planebrush bound to the plany $2\eta_j$-extremal collection $(\Tilde{\TT}', \Tilde{Y}')_\rho$. More specifically, we apply Theorem \ref{planebrush} with $\epsilon_2$ in place of $\epsilon$; $1$ in place of $\epsilon_1$; and $\sigma_4$ in place of $\epsilon_2$. Then, we get
\begin{align} 
    \bigg| \bigcup_{\Tilde{T}\in \Tilde{\TT}} \Tilde{Y}(\Tilde{T})\bigg|&\geq c_{\epsilon_2}\rho^{4\eta_j}(\rho^{-3\eta_j-3\eta_{j-1}})^{-2/\sigma_4}\rho^{\epsilon_2} (\rho^{4\eta_j})^{4/3} \rho^{2/3} \rho^{4\eta_j}\nonumber \\
    &\geq c_{\epsilon_2} \rho^{14\eta_j+\frac{6(\eta_j+\eta_{j-1})}{\sigma_4}+\epsilon_2}\rho^{2/3}. \label{ineq1}
\end{align}
We will connect what is happening across different scales using bounds on multiplicities. To this end, let us now work towards translating the above inequality into an upper bound on the average multiplicity $\mu_{\Tilde{Y}}$. We have
\begin{equation}\label{ineq2}
    \mu_{\Tilde{Y}}\bigg| \bigcup_{\Tilde{T}\in \Tilde{\TT}} \Tilde{Y}(\Tilde{T})\bigg|\sim \sum_{\Tilde{T}\in \Tilde{\TT}}|\Tilde{Y}(\Tilde{T})|\lesssim \rho^{-\eta_j}.
\end{equation}
Combining inequalities \eqref{ineq1} and \eqref{ineq2}, we get
\begin{equation}\label{ineq3}
    \mu_{\Tilde{Y}}\leq c_{\epsilon_2} \rho^{-15\eta_j-\frac{6(\eta_j+\eta_{j-1})}{\sigma_4}-\epsilon_2}\rho^{-2/3}.
\end{equation}
Now let us get an upper bound for the multiplicity of $\rho$-tubes through any point. For this, we use the following inequality
$$
\mu_{\Tilde{Y}}=\frac{\sum_{\Tilde{T}\in \Tilde{\TT}}|\Tilde{Y}(\tilde{T})|}{\big| \bigcup_{\tilde{T}\in \Tilde{\TT}} \Tilde{Y}(\tilde{T})\big|}\gtrsim \frac{\rho^{\eta_j}}{\rho^{\sigma_4-\eta_j}}\gtrsim \rho^{-\sigma_4+2\eta_j}.
$$
Note that Property (iv) of Lemma \ref{mainlem} along with the above inequality gives us that $\#\Tilde{\TT}_{\Tilde{Y}}(\Tilde{Q})\lesssim \rho^{-\sigma_4-\epsilon}\lesssim \mu_{\Tilde{Y}}\rho^{-3\eta_j}$. Plugging this into \eqref{ineq3}, we get that
\begin{equation}\label{ineq3a}
    \#\Tilde{\TT}_{\Tilde{Y}}(\Tilde{Q})\leq c_{\epsilon_2} \rho^{-18\eta_j-\frac{6(\eta_j+\eta_{j-1})}{\sigma_4}-\epsilon_2}\rho^{-2/3}.
\end{equation}

At the finer scale, Lemma \ref{mainlem} tells us that the unit rescaling of $(\TT'[\Tilde{T}],Y')_{\Tilde{\delta}}$ relative to $\Tilde{T}$ is an $\eta_j$-extremal collection of $\Tilde{\delta}\rho^{-1}$-tubes and has constant multiplicity $\mu_{\text{fine}}$. Note that $V(\eta_j, \Tilde{\delta}\rho^{-1})$ is in particular a lower bound over the volumes of unions of $\eta_j$-extremal collection of tubes at scale $\Tilde{\delta}\rho^{-1}$. Combining this fact with an inequality like \eqref{ineq2} at the scale $\Tilde{\delta}\rho^{-1}$, we get that
\begin{equation}\label{ineq4}
    \#\TT'[\Tilde{T}](Q)\lesssim \bigg(\frac{\Tilde{\delta}}{\rho}\bigg)^{-\eta_j}V(\eta_j, \Tilde{\delta}\rho^{-1})^{-1}.
\end{equation}
Combining inequalities \eqref{ineq3a} and \eqref{ineq4}, we get
\begin{equation}
    \mu_{Y'}\leq c_{\epsilon_2} \rho^{-18\eta_j-\frac{6(\eta_j+\eta_{j-1})}{\sigma_4}-\epsilon_2}\rho^{-2/3}\bigg(\frac{\Tilde{\delta}}{\rho}\bigg)^{-\eta_j}V(\eta_j, \Tilde{\delta}\rho^{-1})^{-1}.
\end{equation}
Now let us convert this to a lower bound on the volume of union of tubes. We have $\mu_{Y'}\big| \bigcup_{T\in \TT'} Y'(T)\big|\sim \sum_{T\in \TT'}|Y'(T)|\geq \Tilde{\delta}^{24\eta_{j+1}+2\eta_j^3}$ since $(\TT',Y')_{\Tilde{\delta}}$ is a $\Tilde{\delta}^{23\eta_{j+1}+2\eta_j^3}$-refinement of $(\TT,Y)_{\Tilde{\delta}}$. Therefore, linking this with above inequality we get
\begin{align}
    \bigg| \bigcup_{T\in \TT} Y(T)\bigg|&\geq c_{\epsilon_2} \Tilde{\delta}^{24\eta_{j+1}+2\eta_j^3}\rho^{18\eta_j+\frac{6(\eta_j+\eta_{j-1})}{\sigma_4}+\epsilon_2}\rho^{2/3}\bigg(\frac{\Tilde{\delta}}{\rho}\bigg)^{\eta_j} V(\eta_j, \Tilde{\delta}\rho^{-1})\nonumber\\
    &\geq c_{\epsilon_2} \Tilde{\delta}^{24\eta_{j+1}+2\eta_j^3} \rho^{18\eta_j+\frac{6(\eta_j+\eta_{j-1})}{\sigma_4}+\epsilon_2}\rho^{2/3}\bigg(\frac{\Tilde{\delta}}{\rho}\bigg)^{\eta_j} c_j \delta^{4/N} (\Tilde{\delta}\rho^{-1})^{3/4}\nonumber\\
    &\geq c_{\epsilon_2} c_j \bigg(\rho^{-\frac{3}{4}+\frac{2}{3}+17\eta_j+\frac{6(\eta_j+\eta_{j-1})}{\sigma_4}+\epsilon_2} \,\Tilde{\delta}^{24\eta_{j+1}+2\eta_j^3+\eta_j}\bigg) \delta^{4/N}\Tilde{\delta}^{3/4}\nonumber\\
    &\geq c_{\epsilon_2} c_j \bigg(\rho^{-\frac{1}{12}+\frac{23(\eta_j+\eta_{j-1})}{\sigma_4}+\epsilon_2} \,{\delta}^{24\eta_{j+1}+3\eta_j}\bigg) \delta^{4/N}\Tilde{\delta}^{3/4}. \label{brackineq}
\end{align}
We used the inductive hypothesis, namely $V(\eta_j, \Tilde{\delta}\rho^{-1})\geq c_j \delta^{4/N} (\Tilde{\delta}\rho^{-1})^{3/4}$, in the second line of the above series of inequalities. Next we would like to show that the bracketed expression is greater than or equal to 1. Informally, this will happen because the $\rho^{-1/12}$ term in the bracketed expression is large enough to absorb the subsequent terms. We now prove this fact formally. Substituting $\rho=\delta^{1/N}$ into the bracketed expression, we get that
\begin{align*}
    \rho^{-\frac{1}{12}+\frac{23(\eta_j+\eta_{j-1})}{\sigma_4}+\epsilon_2} \,{\delta}^{24\eta_{j+1}+3\eta_j}&= ({\delta^{1/N}})^{-\frac{1}{12}+\frac{23(\eta_j+\eta_{j-1})}{\sigma_4}+\epsilon_2} \,{\delta}^{24\eta_{j+1}+3\eta_j}\\
    &\geq \delta^{[\frac{\epsilon_0}{4}(-\frac{1}{12}+\frac{23(\eta_j+\eta_{j-1})}{\sigma_4}+\epsilon_2)]} {\delta}^{24\eta_{j+1}+3\eta_j}.
\end{align*}
Thus, we have reduced the bracketed expression to a power of $\delta$. Further analyzing the exponent of $\delta$, we get that
\begin{align*}
    \bigg[\frac{\epsilon_0}{4}\bigg(-\frac{1}{12}+\frac{23(\eta_j+\eta_{j-1})}{\sigma_4}+\epsilon_2\bigg)\bigg]+24\eta_{j+1}+3\eta_j&\leq -\frac{\epsilon_0}{50}+\frac{46\eta_0}{\sigma_4}+\epsilon_2+\frac{27\eta_0}{\sigma_4}\\
    &\leq -\bigg(\frac{\epsilon_0}{100}-\epsilon_2\bigg)-\bigg(\frac{\epsilon_0}{100}-\frac{100\eta_0}{\sigma_4}\bigg)
\end{align*}
The first bracketed term is positive because of condition \eqref{condeps} and the second bracketed term is also positive due to the condition \eqref{condeta}. Thus, we have shown that the bracketed expression in \eqref{brackineq} is bounded below by a negative power of $\delta$, in particular it is greater than or equal to 1. Hence, we get that $\big| \bigcup_{T\in \TT} Y(T)\big|\geq c_{j+1} \delta^{4/N}\Tilde{\delta}^{3/4}$. Therefore, in either the trilinear case (a) or plany case (b), we get the required bound. Thus, we can conclude 
\begin{equation}
    V(\eta_{j+1}, \Tilde{\delta})\geq c_{j+1} \delta^{4/N} \Tilde{\delta}^{3/4}.
\end{equation}
This completes the proof of the inductive step and hence the whole theorem.


\begin{thebibliography}{}

\bibitem{besi} A. S. Besicovitch, \textit{Sur deux questions d'integrabilite des fonctions}, J. Soc. Phys. Math. \textbf{2} (1919), 105–123.

\bibitem{fgbook} C. J. Bishop, Y. Peres, \textit{Fractals in Probability and Analysis}, Cambridge Studies in Advanced Mathematics (162), Cambridge University Press, 2017. 

\bibitem{bour} J. Bourgain, \textit{Harmonic analysis and combinatorics: How much may they contribute to each other?}, Mathematics: frontiers and perspectives, 13–32, Amer. Math. Soc., Providence, RI, 2000. 

\bibitem{christ} M. Christ, J. Duoandikoetxea, J. L. Rubio de Francia, \textit{Maximal operators associated to the Radon transform and the Calder\'on-Zygmund method of rotations}, Duke Math. J. \textbf{53} (1986), 189-209.

\bibitem{davies} R. O. Davies, \textit{Some remarks on the Kakeya problem}, Proc. Cambridge Phil. Soc. \textbf{69} (1971), 417–421.

\bibitem{drury} S. Drury, \textit{$L^p$ estimates for the x-ray transform}, Ill. J. Math. \textbf{27} (1983), 125–129.

\bibitem{dvir} Z. Dvir, \textit{On the size of Kakeya sets in finite fields}, J. Amer. Math. Soc. \textbf{22} (2009), no. 4, 1093-1097.

\bibitem{feff} C. Fefferman, \textit{The Multiplier Problem for the Ball}, Ann. of Math., \textbf{94}, (1971), no. 2, 330–36. 

\bibitem{guthzahl} L. Guth, J. Zahl, \textit{Polynomial Wolff axioms and Kakeya-type estimates in $\RR^4$}, Proc. London Math. Soc. (3) \textbf{117} (2018), no. 1, 192–220.
\bibitem{katzlabatao} N. H. Katz, I. Łaba, T. Tao, \textit{An Improved Bound on the Minkowski Dimension of Besicovitch Sets in $\RR^3$}, Ann. Math. \textbf{152} (2000) 383–446.
\bibitem{katzrog} N. H. Katz, K. M. Rogers, \textit{On the polynomial Wolff axioms},
Geom. Func. Anal. \textbf{28} (2018), no. 6, 1706-1716.
\bibitem{katztao} N. H. Katz, T. Tao, \textit{Recent progress on the Kakeya conjecture}, Publ. Mat. \textbf{46} (2002), 161–179.
\bibitem{kzahl}  N. H. Katz, J. Zahl, \textit{An improved bound on the Hausdorff dimension of Besicovitch sets in $\RR^3$}, J. Amer. Math. Soc. \textbf{32} (2019), no. 1, 195–259.
\bibitem{katzzahl} N. H. Katz, J. Zahl, \textit{A Kakeya maximal function estimate in four dimensions using planebrushes}, Rev. Mat. Iberoam. (1) \textbf{37} (2021), 317-359.
\bibitem{tao} T. Tao, \textit{From rotating needles to stability of waves: emerging connections between combinatorics, analysis, and PDE}, Notices Amer. Math. Soc. \textbf{48} (2001), no. 3, 294–303.
\bibitem{wangzahl} H. Wang, J. Zahl, \textit{Sticky Kakeya sets and the sticky Kakeya conjecture} (2022), https://arxiv.org/abs/2210.09581. 
\bibitem{wolff1} T. Wolff, \textit{An improved bound for Kakeya type maximal functions}, Revista Mat. Iberoamericana. \textbf{11} (1995), 651–674.
\bibitem{wolff} T. Wolff, \textit{Recent work connected with the Kakeya problem}, Prospects in mathematics (Princeton, NJ, 1996), 129–162, Amer. Math. Soc., Providence, RI, 1999. 
\bibitem{zahl1} J. Zahl, \textit{A discretized Severi-type theorem with applications to harmonic analysis}, Geom. Funct. Anal., \textbf{28} (2018), 1131–1181.
\bibitem{zahl2} J. Zahl, \textit{Union of lines in $\RR^n$},
Mathematika. 69(2) (2023), 473-481.
\end{thebibliography}
\end{document}